\documentclass[11pt]{article}

\usepackage[a4paper,left=2cm,right=2cm,top=2.5cm,bottom=2.5cm]{geometry}
\usepackage{amsmath,amssymb,amsfonts,amsthm}
\usepackage{mathrsfs}
\usepackage{latexsym}
\usepackage{enumerate}
\usepackage{verbatim}
\usepackage{comment}
\usepackage{color}
\usepackage{graphicx}
\usepackage{float}
\usepackage{booktabs}

\usepackage{algorithm}
\usepackage{algorithmic}

\usepackage{epsfig}
\usepackage{epsf}
\usepackage{graphpap}
\usepackage{epic}
\usepackage{curves}
\usepackage{tikz}
\usepackage{subfigure}
\usepackage{pstricks}

\usepackage[numbers]{natbib}
\usepackage{hyperref}

\usepackage{authblk}

\definecolor{gray}{rgb}{0.25, 0.25, 0.25}

\usepackage{titlesec}

\renewcommand{\thesection}{\arabic{section}}
\renewcommand{\thesubsection}{\thesection.\arabic{subsection}}
\renewcommand{\thesubsubsection}{\thesubsection.\arabic{subsubsection}}

\titleformat{\section}
  {\normalfont\Large\bfseries}
  {\thesection.}
  {0.5em}
  {}

\titleformat{\subsection}
  {\normalfont\large\bfseries}
  {\thesubsection.}
  {0.5em}
  {}

\titleformat{\subsubsection}
  {\normalfont\normalsize\bfseries}
  {\thesubsubsection.}
  {0.5em}
  {}

\numberwithin{equation}{section}

\def \[{\begin{equation}}
\def \]{\end{equation}}

\newtheorem{thm}{Theorem}[section]
\newtheorem{defi}[thm]{Definition}
\newtheorem{lem}[thm]{Lemma}
\newtheorem{cor}[thm]{Corollary}
\newtheorem{prop}[thm]{Proposition}

\newtheorem{con}[thm]{Conjecture}

\newtheorem*{thm*}{Theorem}
\newtheorem*{prop*}{Proposition}

\newcommand\ex{\ensuremath{\mathrm{ex}}}

\newcommand\cA{{\mathcal A}}

\newcommand\cD{{\mathcal D}}

\newcommand\cF{{\mathcal F}}
\newcommand\cG{{\mathcal G}}
\newcommand\cH{{\mathcal H}}

\newcommand\cS{{\mathcal S}}

\begin{document}
\title{
{A Spectral Confirmation of the Erd\H{o}s Matching Conjecture} 
}

\author{Liying Kang\,}
\author{Yongchun Lu$^*$\,}
\author{Xiying Yuan\,}
\author{Junpeng Zhou\,}

\affil{\small \,Department of Mathematics, Shanghai University, Shanghai 200444, PR China \\
\,Newtouch Center for Mathematics of Shanghai University, Shanghai 200444, PR China}

\date{}

\maketitle

\footnotetext{*\textit{Corresponding author}.}
\footnotetext{
Email addresses: \texttt{lykang@shu.edu.cn} (L. Kang), \texttt{luyongchun@shu.edu.cn} (Y. Lu),
\texttt{xiyingyuan@shu.edu.cn} (X. Yuan), \texttt{junpengzhou@shu.edu.cn} (J. Zhou).}

\begin{abstract}
The Erd\H{o}s Matching Conjecture concerns the maximum number of hyperedges in an $r$-uniform hypergraph with bounded matching number. In this paper, we study a spectral counterpart of this conjecture. For sufficiently large $n$, we determine the maximum spectral radius over all $n$-vertex $r$-uniform hypergraphs whose matching number is less than $s$, and characterize the unique extremal hypergraph.

To establish the main theorem, we first apply the shifting method  to reduce the problem to shifted hypergraphs. We then derive several spectral upper bounds through hypergraph decomposition and related variational estimates for tensor spectral radii. 
With these estimates, we analyze the structural properties of shifted-saturated hypergraphs and prove the spectral extremal theorem for shifted hypergraphs with bounded matching numbers. Finally, we drop the shifted condition and extend our spectral bound to general  $r$-uniform hypergraphs.

Our main theorem states that for any $n$-vertex $r$-uniform hypergraph $H$ with matching number $\nu(H)<s$, the inequality $\rho(H)\leq \rho(\mathcal{F}_{s-1}(n))$ holds whenever $n$ is sufficiently large. Here $\mathcal{F}_{a}(n)$ denotes the family of all $r$-subsets of $[n]$ intersecting the vertex set $[a]$, and equality is attained if and only if $H$ is isomorphic to $\mathcal{F}_{s-1}(n)$. As an immediate corollary, we derive a spectral counterpart of the classical Erd\H{o}s-Ko-Rado theorem for intersecting hypergraph families.
\end{abstract}

\begin{flushleft}
		\hspace{2.5em}\textbf{Keywords:} Hypergraph, Spectral radius, Erd\H{o}s Matching Conjecture, Shifting\\
		\hspace{2.5em}\textbf{AMS (2000) subject classification:} 05C50, 05C65
	\end{flushleft}

\section{Introduction}
An \textit{$r$-uniform hypergraph} ($r$-graph for short) \scalebox{0.96}{$\cH=(V(\cH),E(\cH))$} consists of a vertex set \scalebox{0.96}{$V(\cH)$} and a hyperedge set \scalebox{0.96}{$E(\cH)$}, where each hyperedge in \scalebox{0.96}{$E(\cH)$} is an $r$-subset of $V(\cH)$. The size of \scalebox{0.96}{$E(\cH)$} is denoted by $e(\cH)$.
For a vertex \scalebox{0.96}{$v \in V(\cH)$}, let \scalebox{0.96}{$E_v(\cH) = \{e \in E(\cH) : v \in e\}$} denote the set of hyperedges containing $v$.
Throughout this paper, we identify hypergraphs with their hyperedge sets. Let \scalebox{0.96}{$\binom{V}{r}$} denote the family of all $r$-subsets of $V$, namely, the complete $r$-graph on $V$. For positive integers $n\geq m$, we use $[n]$ and $[m,n]$ for the sets $\{1,2,\dots,n\}$ and $\{m,m+1,\dots,n\}$, respectively.

Given an $r$-graph $\mathcal{F}$, an $r$-graph $\cH$ is called \textit{$\mathcal{F}$-free} if $\cH$ does not contain a copy of $\mathcal{F}$ as a subhypergraph. The \textit{Tur\'{a}n number} of $\mathcal{F}$, denoted by ${\rm{ex}}_r(n,\mathcal{F})$, is the maximum number of hyperedges among all $n$-vertex $\mathcal{F}$-free $r$-graphs.
A classical result in extremal graph theory is the Erd\H{o}s--Gallai theorem \cite{EGa}, which determines the Tur\'{a}n number of $M_{s}$, where $M_{s}$ denotes a matching of size $s$, i.e., the graph consisting of $s$ independent edges. 
It is natural to consider the corresponding problem for hypergraphs.  Let $M^r_{s}$ denote a matching of size $s$ in an $r$-graph. In 1965, Erd\H{o}s \cite{Er1} proposed the following conjecture on hypergraph  matchings.
\medskip
\begin{con}[Erd\H{o}s Matching Conjecture~\cite{Er1}]\label{con1}
Let integers $s,r\geq2$ and $n\geq sr-1$. Then $$\ex_r(n,M_{s}^r)\leq \max\left\{ \binom{sr-1}{r}, \binom{n}{r} - \binom{n-s+1}{r} \right\}.$$
\end{con}
\medskip
The above conjecture is known to hold for the cases $r=2$ \cite{EGa} and $r=3$ \cite{Fran2017,LuMi}. For the general case, Erd\H{o}s \cite{Er1} verified Conjecture \ref{con1} for sufficiently large $n$. Later, this threshold was improved by Bollob\'as, Daykin and Erd\H{o}s \cite{68}, Huang, Loh and Sudakov \cite{312}, and Frankl, \L uczak and Mieczkowska \cite{215}. For further developments and relevant results concerning this conjecture, one can refer to \cite{Fr1,213,FrKu,KoKu}. Very recently, Kupavskii and Sokolov \cite{KuSo} completely resolved the non-uniform case of the conjecture for $n\leq 3s$. Hou, Hu and Liu \cite{Hou-Hu-Liu} proved the conjecture for the case $r=4$ and $s\geq 6961$.

The goal of this paper is to establish a spectral confirmation of the Erd\H{o}s Matching Conjecture for sufficiently large $n$. We begin by introducing some notation. For positive integers $r$ and $n$, a real tensor $\mathcal{A}=$ ( $a_{i_1 i_2 \cdots i_r}$ ) of order $r$ and dimension $n$ is defined as a multidimensional array where each entry $a_{i_1 i_2 \cdots i_r} \in \mathbb{R}$ for all $i_1, i_2, \ldots, i_r \in[n]$. A tensor $\mathcal{A}$ is called \textit{symmetric} if $\mathcal{A}_{i_1i_2\cdots i_r}=\mathcal{A}_{\sigma({i_1})\cdots\sigma({i_r})}$, where $\sigma$ is any permutation of its indices. The concept of tensor eigenvalues was independently established by Qi \cite{Qi-new} and Lim \cite{Lim} in 2005.
\medskip
\begin{defi}[\cite{Lim}, \cite{Qi-new}]
Let $\mathcal{A}$ be a tensor of order $r$ and dimension $n$. Let ${\bf x}^{[r]}=(x_1^r,\ldots,x_n^r)^{\rm T}$. Then $\mathcal{A}{\bf{x}}^{r-1}$ is defined to be a vector in $\mathbb{C}^n$\ whose $i$-th component is
$$(\mathcal{A}{\bf x}^{r-1})_i
=\sum_{i_2,\dots,i_r=1}^{n}
a_{ii_2\cdots i_r}x_{i_2}\cdots x_{i_r},
\quad i=1,2,\dots,n.$$
If there exists a number $\lambda\in\mathbb{C}$ and a nonzero vector ${\bf x}\in\mathbb{C}^n$ such that
$$\mathcal{A}{\bf x}^{r-1}=\lambda {\bf x}^{[r-1]},$$
then $\lambda$ is called an eigenvalue of $\mathcal{A}$, $\bf{x}$ is called an
eigenvector of $\mathcal{A}$ corresponding to the eigenvalue $\lambda$.
The spectral radius of $\mathcal{A}$ is the maximum modulus of the eigenvalues
of $\mathcal{A}$.
\end{defi}
\medskip
Shao \cite{Sh} defined the general product of tensors, thus $\mathcal{A} {\bf x}^{r-1}$ can be simply written as $\mathcal{A} x$. In the following, we use $\mathcal{A} {\bf x}$ to denote $\mathcal{A} {\bf x}^{r-1}$.
Let $\mathbb{R}^n_+$ and $\mathbb{R}^n_{++}$ denote the sets of nonnegative real vectors and positive real vectors of dimension $n$, respectively.
\medskip
\begin{lem}[\cite{Qi}]\label{lem:Qi}
Let ${\cA}$ be an $r$-order $n$-dimension nonnegative symmetric tensor with integer $r\ge2$. Then its spectral radius admits the variational characterization
\begin{equation*}
\rho({\cA})=\max\left\{\,{\bf x}^\mathrm{T}{\cA}{\bf x}: {\bf x}=(x_1,\dots,x_n)^\mathrm{T}\in\mathbb{R}^n_+,\ \sum_{i=1}^n x_i^r=1\,\right\}.
\end{equation*}
\end{lem}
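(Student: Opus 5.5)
We prove Lemma \ref{lem:Qi} by comparing the eigenvalue characterization of the spectral radius with the homogeneous form $F({\bf x})={\bf x}^{\mathrm T}\cA{\bf x}=\sum_{i_1,\dots,i_r}a_{i_1\cdots i_r}x_{i_1}\cdots x_{i_r}$ on the compact set $S_+=\{{\bf x}\in\mathbb{R}^n_+:\sum_i x_i^r=1\}$. Since $F$ is continuous, the maximum $\mu:=\max_{S_+}F$ is attained, say at ${\bf y}$. It suffices to show $\rho(\cA)\le\mu$ and $\mu\le\rho(\cA)$.

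\emph{Step 1: $\mu\le\rho(\cA)$.} We apply Lagrange multipliers (KKT conditions) at ${\bf y}$ for the constraint $\sum x_i^r=1$, $x_i\ge0$. Symmetry of $\cA$ gives $\partial F/\partial x_i=r(\cA{\bf x}^{r-1})_i$. The KKT conditions yield $r(\cA{\bf y}^{r-1})_i=\lambda r y_i^{r-1}$ whenever $y_i>0$, and $(\cA{\bf y}^{r-1})_i\le\lambda y_i^{r-1}=0$ whenever $y_i=0$. By nonnegativity, in the latter case $(\cA{\bf y}^{r-1})_i=0$. Thus $\cA{\bf y}^{r-1}=\lambda{\bf y}^{[r-1]}$, so $\lambda$ is an eigenvalue with a nonnegative eigenvector. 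Pairing with ${\bf y}$ gives $\lambda=\lambda\sum y_i^r={\bf y}^{\mathrm T}\cA{\bf y}=\mu$. Hence $\mu$ is an eigenvalue, and $\mu\le\rho(\cA)$.

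\emph{Step 2: $\rho(\cA)\le\mu$.} Take any eigenpair $(\lambda,{\bf x})$ with ${\bf x}\in\mathbb{C}^n\setminus\{0\}$, and normalize so that ${\bf z}=(|x_1|,\dots,|x_n|)$ lies in $S_+$ after scaling. Taking moduli in $\lambda x_i^{r-1}=\sum a_{i i_2\cdots i_r}x_{i_2}\cdots x_{i_r}$ and using $a\ge0$ gives $|\lambda| z_i^{r-1}\le(\cA{\bf z}^{r-1})_i$. Multiplying by $z_i$ and summing, we obtain $|\lambda|\sum z_i^r\le{\bf z}^{\mathrm T}\cA{\bf z}$, that is, $|\lambda|\le F({\bf z})\le\mu$. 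Taking the maximum over all eigenvalues gives $\rho(\cA)\le\mu$.

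\emph{Main obstacle.} The only delicate point is the KKT step on the boundary of the nonnegative orthant. The constraint gradient $r{\bf y}^{[r-1]}$ is nonzero, so the constraint qualification holds. The sign condition for coordinates with $y_i=0$ is handled by nonnegativity of $\cA$, as shown above. Alternatively, this boundary argument can be avoided entirely by first perturbing to $\cA+\varepsilon\mathcal{J}$, where $\mathcal{J}$ is the all-ones tensor, so that maximizers are interior. Continuity of both sides in $\varepsilon$ then recovers the statement for $\cA$. Step 2 is routine, and Steps 1 and 2 together yield the claimed equality.
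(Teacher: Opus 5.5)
Your proof is correct. The paper itself gives no proof of this lemma; it quotes it from \cite{Qi}. In the standard argument there, $\mu\le\rho(\cA)$ comes from the optimality conditions at a maximizer, essentially as in your Step 1. The reverse inequality $\rho(\cA)\le\mu$ is taken from the Perron--Frobenius theorem (Theorem~\ref{P-F-thm}(i)): $\rho(\cA)$ is an eigenvalue with a nonnegative eigenvector ${\bf x}$, which after scaling lies in $S_+$, and pairing the eigen-equation with ${\bf x}$ gives $\rho(\cA)=F({\bf x})\le\mu$.

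Your Step 2 replaces that appeal with the triangle inequality applied to an arbitrary complex eigenpair. This is more elementary, uses neither symmetry nor any Perron--Frobenius theory, and bounds every eigenvalue directly. Moreover, Step 1 shows that the maximizer ${\bf y}\ge 0$ is an eigenvector for $\mu=\rho(\cA)$, so your two steps together reprove Theorem~\ref{P-F-thm}(i) in the symmetric case. The cited route is shorter only because it relies on the Perron--Frobenius theorem, which is a considerably deeper result than the lemma (though valid without symmetry).

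There are two small points.

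First, in Step 1 the active constraints at ${\bf y}$ also include $x_i\ge 0$ for every $i$ with $y_i=0$. The constraint qualification therefore needs $r{\bf y}^{[r-1]}$ to be linearly independent of the corresponding unit vectors, not merely nonzero. This holds because ${\bf y}^{[r-1]}$ is nonzero and vanishes exactly on those coordinates, so your conclusion stands.

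Second, your perturbation alternative is not really boundary-free. Showing that maximizers for $\cA+\varepsilon\mathcal{J}$ are interior needs the same first-order argument at the boundary. Its appeal to ``continuity of $\rho$'' in $\varepsilon$ would also need justification. It is cleaner to pass to a convergent subsequence of the normalized maximizers ${\bf y}_\varepsilon$, whose limit is a nonnegative eigenvector of $\cA$ for $\mu$. Since this is only an aside, it does not affect your main argument.
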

\medskip
Let $\cA$ be an $r$-order $n$-dimension tensor. If there exists a nonempty proper subset $I\subseteq [n]$ such that ${a}_{i_1i_2\cdots i_r}=0$ for any $i_1\in I$ and $\{i_2,\dots,i_r\}\nsubseteq I$, then ${\cA}$ is called \textit{weakly reducible}. If ${\cA}$ is not weakly reducible, then we say that ${\cA}$ is \textit{weakly irreducible} \cite{Fr-Ga-Ha}.
\medskip
\begin{thm}[\cite{Fr-Ga-Ha},\cite{Ya-Ya}]\label{P-F-thm}
Let ${\cA}$ be an $r$-order $n$-dimension nonnegative tensor.
\begin{enumerate}[\label=(\roman*)]
\item $\rho({\cA})$ is an eigenvalue of ${\cA}$ with a corresponding nonnegative eigenvector.
\item If ${\cA}$ is weakly irreducible, then $\rho({\cA})$ is the unique eigenvalue of ${\cA}$ admitting an eigenvector $\boldsymbol{x}\in\mathbb{R}^n_{++}$, and such eigenvector is unique up to positive scalar multiplication.
\end{enumerate}
\end{thm}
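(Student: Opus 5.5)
Since this theorem is quoted from \cite{Fr-Ga-Ha,Ya-Ya}, I would reprove it by reducing it to the nonlinear map $T(\mathbf{x})=(\mathcal{A}\mathbf{x})^{[1/(r-1)]}$. This map is continuous, monotone (if $\mathbf{x}\le\mathbf{y}$ then $T\mathbf{x}\le T\mathbf{y}$) and positively homogeneous of degree one on $\mathbb{R}^n_+$. The eigen-equation $\mathcal{A}\mathbf{x}=\lambda\mathbf{x}^{[r-1]}$ with $\mathbf{x}\ge 0$ is equivalent to $T\mathbf{x}=\lambda^{1/(r-1)}\mathbf{x}$.

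For (i), I would first handle the positive perturbation $\mathcal{A}_\varepsilon=\mathcal{A}+\varepsilon\mathcal{J}$, where $\mathcal{J}$ is the all-ones tensor. On the simplex $\Delta=\{\mathbf{x}\ge0:\sum x_i=1\}$, the map $\mathbf{x}\mapsto T_\varepsilon\mathbf{x}/\sum_i(T_\varepsilon\mathbf{x})_i$ is well defined and continuous, since $T_\varepsilon\mathbf{x}>0$. Brouwer's theorem then gives a fixed point $\mathbf{x}_\varepsilon$, which is automatically positive, with eigenvalue $\lambda_\varepsilon$.

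Next I would show $\lambda_\varepsilon=\rho(\mathcal{A}_\varepsilon)$ by a Collatz--Wielandt comparison. Let $(\mu,\mathbf{y})$ be any complex eigenpair. The triangle inequality gives $|\mu|\,|\mathbf{y}|^{[r-1]}\le\mathcal{A}_\varepsilon|\mathbf{y}|$. Put $t=\max_i|y_i|/(x_\varepsilon)_i$, attained at an index $i$. Then monotonicity and homogeneity give
\[
|\mu|\,t^{r-1}(x_\varepsilon)_i^{r-1}\le(\mathcal{A}_\varepsilon t\mathbf{x}_\varepsilon)_i=t^{r-1}\lambda_\varepsilon(x_\varepsilon)_i^{r-1},
\]
so $|\mu|\le\lambda_\varepsilon$.

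Finally I would let $\varepsilon\to0$ along a subsequence. By compactness of $\Delta$ we get $\mathbf{x}_\varepsilon\to\mathbf{x}\ge0$, $\mathbf{x}\ne0$. The $\lambda_\varepsilon$ are nonincreasing as $\varepsilon\downarrow 0$, again by the comparison argument, so $\lambda_\varepsilon\to\lambda$ with $\mathcal{A}\mathbf{x}=\lambda\mathbf{x}^{[r-1]}$. Any eigenvalue $\mu$ of $\mathcal{A}$ satisfies $|\mu|\le\lambda_\varepsilon$ by the same comparison, since $\mathcal{A}\le\mathcal{A}_\varepsilon$ entrywise. Hence $|\mu|\le\lambda$, and therefore $\lambda=\rho(\mathcal{A})$.

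For (ii), I would use the standard fact from \cite{Fr-Ga-Ha} that weak irreducibility is equivalent to strong connectivity of the digraph $G(\mathcal{A})$, in which $i\to j$ whenever $a_{i i_2\cdots i_r}\neq0$ for some tuple containing $j$. The proof has three steps.
\begin{enumerate}
\item \emph{Positivity.} Let $\mathbf{x}\ge0$ be the eigenvector from (i) and $Z$ its zero set. For $i\in Z$ we have $0=\sum a_{ii_2\cdots i_r}x_{i_2}\cdots x_{i_r}$. I would show that $Z\neq\emptyset$ produces a proper set violating weak irreducibility. This is cleanest phrased as: $(\mathrm{Id}+T)^{n-1}$ maps every nonzero nonnegative vector to a positive one, because each application of $\mathrm{Id}+T$ enlarges the support along the arcs of $G(\mathcal{A})$.
\item \emph{Uniqueness of the eigenvalue.} Suppose $\mathbf{y}>0$ is an eigenvector with eigenvalue $\mu$. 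The ratio argument of (i), run in both directions with $\max$ and $\min$ of $y_i/x_i$, gives $\mu\le\rho$ and $\mu\ge\rho$.
\item \emph{Uniqueness up to scaling.} For two positive eigenvectors $\mathbf{x},\mathbf{y}$, set $t=\min_i x_i/y_i$, so that $\mathbf{x}\ge t\mathbf{y}$ with equality on a nonempty set $S$. For $i\in S$, the equality $(\mathcal{A}\mathbf{x})_i=(\mathcal{A}t\mathbf{y})_i$ together with $\mathbf{x}\ge t\mathbf{y}$ and positivity forces $x_j=ty_j$ for every out-neighbour $j$ of $i$. Strong connectivity then propagates this to $S=[n]$.
\end{enumerate}

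I expect the main obstacle to be step 1, the positivity and support-propagation argument. The issue is translating the paper's formulation of weak reducibility, stated via index sets $I$, into the digraph statement while correctly handling tuples with mixed indices when $r\ge3$. I would first try taking $I=Z$ and checking directly that a nonzero entry with $i_1\in Z$ and some index outside $Z$ contradicts $(\mathcal{A}\mathbf{x})_{i_1}=0$. If the mixed-index cases do not close, I would fall back on the Hilbert projective metric: $T$ is nonexpansive in this metric, which yields both positivity and uniqueness as in \cite{Fr-Ga-Ha}.
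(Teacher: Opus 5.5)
Your part (i) is correct; it is essentially the Yang--Yang perturbation argument (the paper itself only cites the result). Steps 2 and 3 of (ii) are also correct once a positive eigenvector for $\rho(\mathcal{A})$ is available. The gap is step 1, which is where the \emph{existence} of a positive eigenvector lives, and neither of your mechanisms supplies it.

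The claim that $(\mathrm{Id}+T)^{n-1}$ sends every nonzero nonnegative vector to a positive one is false under weak irreducibility for $r\ge 3$. The reason is that $(T\mathbf{x})_i>0$ needs a nonzero entry $a_{ii_2\cdots i_r}$ with \emph{all} of $i_2,\dots,i_r$ in the support, not merely one arc into it. The property you want is therefore equivalent to the stronger Chang--Pearson--Zhang irreducibility. Concretely, take the adjacency tensor of the connected $3$-graph with the single edge $\{1,2,3\}$ and $\mathbf{x}=(1,0,0)^{\mathrm T}$. Then $\mathcal{A}\mathbf{x}=\mathbf{0}$, so $(\mathrm{Id}+T)^k\mathbf{x}=\mathbf{x}$ for all $k$. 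Moreover, $\mathbf{x}$ is a nonnegative eigenvector (eigenvalue $0$) of a weakly irreducible tensor, and it has zero coordinates. Hence no argument using only the eigen-equations at the zero coordinates can succeed, including your choice $I=Z$. Positivity must use that the eigenvalue is $\rho$, or some quantitative input. The Hilbert-metric fallback as stated does not help either: every monotone homogeneous map is nonexpansive in that metric, reducible ones included, so nonexpansiveness alone yields no interior eigenvector.

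What is missing is a uniform bound on $\max_k(x_\varepsilon)_k/\min_k(x_\varepsilon)_k$ for your positive eigenvectors $\mathbf{x}_\varepsilon$ of $\mathcal{A}_\varepsilon$. You get it by propagating \emph{outward from the minimal coordinate} along arcs of $G(\mathcal{A})$. Let $m=\min_k(x_\varepsilon)_k$ and $\varepsilon\le 1$, so that $\lambda_\varepsilon\le\lambda_1$. Suppose $(x_\varepsilon)_i\le Cm$ and $i\to j$ via an entry $a=a_{ii_2\cdots i_r}>0$. Bounding every factor other than one copy of $(x_\varepsilon)_j$ below by $m$ gives
\begin{equation*}
\lambda_1C^{r-1}m^{r-1}\ge\lambda_\varepsilon(x_\varepsilon)_i^{r-1}\ge(\mathcal{A}\mathbf{x}_\varepsilon)_i\ge a\,(x_\varepsilon)_j\,m^{r-2},
\end{equation*}
so $(x_\varepsilon)_j\le(\lambda_1C^{r-1}/a)\,m$. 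Start at an index attaining $m$ and walk along paths of length at most $n-1$, which exist by strong connectivity. This gives $\max_k(x_\varepsilon)_k\le C'm$, where $C'$ depends only on $n$, $r$, $\lambda_1$ and the smallest nonzero entry of $\mathcal{A}$, not on $\varepsilon$. The limit $\mathbf{x}\in\Delta$ then satisfies $\min_k x_k\ge 1/(nC')>0$.

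This is the tensor form of the Gaubert--Gunawardena boundedness argument underlying \cite{Fr-Ga-Ha}. It handles mixed-index tuples precisely because the remaining factors are bounded below by $m$ rather than required to vanish. With this in place of step 1, your steps 2 and 3 finish (ii).
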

\medskip
 In 2012, Cooper and Dutle \cite{Co-Du} defined the adjacency tensor $\mathcal{A}(\cH)$ for an $r$-graph $\cH$.
\medskip
\begin{defi}[ \cite{Co-Du}]
Let $\cH=(V(\cH),E(\cH))$ be an $r$-graph on $n$ vertices.
The adjacency tensor of $\cH$ is defined as the order $r$ and dimension $n$
tensor $\mathcal{A}(\cH)=(a_{i_1i_2\cdots i_r})$, whose
$(i_1i_2\cdots i_r)$-entry is
\begin{align*}
    a_{i_1i_2\cdots i_r}
=\begin{cases}
\dfrac{1}{(r-1)!}, & \text{if } \{v_{i_1},v_{i_2},\ldots,v_{i_r}\}\in E(\cH),\\[2mm]
0, & \text{otherwise.}
\end{cases}
\end{align*}
\end{defi}

It was proved in \cite{Fr-Ga-Ha} that an $r$-graph $\cH$ is connected if and only if its adjacency tensor $\mathcal{A}(\cH)$ is weakly irreducible. The spectral radius of an $r$-graph $\mathcal{H}$, denoted by $\rho(\mathcal{H})$, is defined as the spectral radius of its adjacency tensor $\mathcal{A}(\mathcal{H})$. If $\mathcal{H}$ is connected, Theorem \ref {P-F-thm} guarantees that $\rho(\mathcal{H})$ is an eigenvalue of $\mathcal{A}(\mathcal{H})$ and possesses the unique positive eigenvector $\mathbf{x}=\left(x_1, \ldots, x_n\right)^{\mathrm{T}}$ with $\|\mathbf{x}\|_r=1$. This vector $\mathbf{x}$ is referred to as the Perron vector of $\mathcal{H}$.  By the eigenequation, the following holds for each $v \in V(\mathcal{H})$:
$$
\rho(\mathcal{H}) x_v^{r-1}=\sum_{e \in E_v(\mathcal{H})} \prod_{u \in e \backslash\{v\}} x_u .
$$
In this paper, we consider the Erd\H{o}s Matching Conjecture in the spectral setting, thereby confirming the Erd\H{o}s Matching Conjecture from a spectral perspective.
Let $\nu(\cH)$ denote the matching number of an $r$-graph $\cH$, i.e., the maximum size of a matching in $\cH$. For an integer $a\ge 0$, define
$$
\mathcal{F}_a(n)=\left\{e\in\binom{[n]}{r}: e\cap [a]\neq\emptyset\right\}.
$$
In particular, $\mathcal{F}_0(n)=\emptyset$. We present our results as follows.
\medskip
\begin{thm}
\label{thm1}
Let $\cH$ be an $r$-graph on $[n]$ with $\nu(\cH)<s$. Then for sufficiently large $n$, we have
$$
\rho(\cH)\le \rho\left(\mathcal{F}_{s-1}(n)\right).
$$
Moreover, equality holds if and only if $\cH\cong \mathcal{F}_{s-1}(n)$.
\end{thm}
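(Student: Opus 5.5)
Our plan follows the route indicated in the abstract: reduce to shifted hypergraphs, prove the theorem there by a structural argument driven by spectral estimates, and then transfer the conclusion back to general hypergraphs.

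\textbf{Step 1 (shifting).} For $i<j$ let $S_{ij}$ be the usual shift operator. We will show that $\rho(S_{ij}(\cH))\ge\rho(\cH)$. To see this, take a nonnegative maximizer $\mathbf x$ of Lemma \ref{lem:Qi} and relabel it so that $x_i\ge x_j$. Each hyperedge that is moved replaces a monomial containing $x_j$ by the same monomial with $x_i$ in its place, so $\mathbf x^{\rm T}\mathcal A(S_{ij}\cH)\mathbf x\ge \mathbf x^{\rm T}\mathcal A(\cH)\mathbf x$. Shifting also satisfies $\nu(S_{ij}\cH)\le\nu(\cH)$, which is classical. Iterating the shifts, we may therefore assume that $\cH$ is shifted. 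By adding edges while keeping the graph shifted and $\nu<s$, we may further assume that $\cH$ is shifted-saturated, since adding edges can only increase $\rho$.

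\textbf{Step 2 (estimates).} The target value satisfies $\rho(\mathcal F_{s-1}(n))=\Theta(n^{r-1})$. More precisely, testing with the vector that is constant on $[s-1]$ and constant on $[s,n]$ and optimizing the ratio shows that $\rho(\mathcal F_{s-1}(n))\ge c_{r,s}\,n^{r-1}$. Here $c_{r,s}$ comes from an explicit two-variable optimization and grows with $s$. By contrast, any $\cH$ with $\nu<s$ has $\rho(\cH)\le C\,n^{r-1}$ through edge-count bounds. For a shifted $\cH$ with $\nu(\cH)<s$, the classical Frankl structure gives
\[
\cH\subseteq\{e:\ |e\cap[sr-1]|\ \ge 1\}\quad\text{and more precisely}\quad \{e: e\subseteq [s,n]\}\cap\cH \text{ has } \nu\le s-1-t,
\]
where $t$ is the number of ``full-degree'' vertices among $[s-1]$. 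We decompose $\cH=\cH_1\cup\cH_2$, where $\cH_1$ consists of the edges meeting the set $T$ of vertices of degree $\ge\varepsilon n^{r-1}$, and $\cH_2$ is the remainder. Then $|T|\le s-1$, since otherwise a greedy argument would produce an $s$-matching. The hypergraph $\cH_2$ has $\nu<s$ and maximum degree $o(n^{r-1})$, hence $e(\cH_2)=O(n^{r-2})$, and the variational bound gives $\rho(\cH_2)=o(n^{r-1})$. Using the subadditivity $\rho(\cH)\le\rho(\cH_1)+\rho(\cH_2)$ together with monotonicity $\rho(\cH_1)\le\rho(\mathcal F_{|T|}(n))$, we obtain
\[
\rho(\cH)\le \rho(\mathcal F_{|T|}(n))+o(n^{r-1}).
\]
Since $c_{r,t}<c_{r,s-1}$ strictly for $t<s-1$, it follows that $|T|=s-1$ for large $n$, and after shifting we may take $T=[s-1]$.

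\textbf{Step 3 (exact structure; main obstacle).} With $T=[s-1]$, the hypergraph $\cH-T$ has $\nu(\cH-T)\le s-1-|T|=0$ as soon as every vertex of $T$ can be extended to a matching avoiding any bounded set, which holds because each vertex of $T$ has high degree. Hence $\cH-T$ is empty, so $\cH\subseteq\mathcal F_{s-1}(n)$. Strict monotonicity of $\rho$ for connected hypergraphs (Theorem \ref{P-F-thm}(ii)) then gives $\rho(\cH)<\rho(\mathcal F_{s-1}(n))$ unless $\cH=\mathcal F_{s-1}(n)$. We expect the delicate point to be the $o(n^{r-1})$ error analysis in Step 2, that is, turning the crude decomposition bound into a strict separation between $\rho(\mathcal F_{s-2}(n))+o(n^{r-1})$ and $\rho(\mathcal F_{s-1}(n))$. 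We would attack this by using the Perron vector of $\cH$: the eigenequation shows that entries outside $T$ are $O(n^{-1/(r-1)})$-small relative to entries in $T$, and this lets us bound the contribution of $\cH_2$ to $\mathbf x^{\rm T}\mathcal A\mathbf x$ by $O(n^{r-2})$.

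\textbf{Step 4 (removing the shifted assumption, uniqueness).} For a general $\cH$ with $\rho(\cH)=\rho(\mathcal F_{s-1}(n))$, every shift weakly increases $\rho$, so all the shifts are spectrally tight and the final shifted graph is $\mathcal F_{s-1}(n)$. We then undo the shifts one at a time. If $S_{ij}(\cH')\cong\mathcal F_{s-1}(n)$ with equal spectral radius, then the equality case in the monomial comparison of Step 1, using the positive Perron vector (in which vertices of $[s-1]$ have strictly larger entries than the others), forces that no edge was actually moved in a way that changes the isomorphism type. Hence $\cH'\cong\mathcal F_{s-1}(n)$ as well, and by induction $\cH\cong\mathcal F_{s-1}(n)$.
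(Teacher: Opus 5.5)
\textbf{Gap in Step 2: the argument works at the wrong order of magnitude.} The claim $\rho(\mathcal F_{s-1}(n))=\Theta(n^{r-1})$ is false. Any $r$-graph with $\nu<s$ has at most $(s-1)r\binom{n-1}{r-1}$ edges, so Lemma~\ref{lem:small-edge-spectral-bound} gives spectral radius $O(n^{(r-1)^2/r})$. Indeed $\rho(\mathcal F_a(n))=c_{a,r}n^{(r-1)^2/r}+O(n^{(r-2)(r-1)/r})$ by Lemma~\ref{lem:Fa-asymptotic}; already for $r=2$ the star has $\rho=\sqrt{n-1}$. Since $(r-1)^2/r<r-1$, every $\rho(\mathcal F_a(n))$ is $o(n^{r-1})$. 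So your bound $\rho(\mathcal H)\le\rho(\mathcal F_{|T|}(n))+o(n^{r-1})$ says nothing and cannot force $|T|=s-1$: the constants in front of $n^{r-1}$ that you compare are all zero. To separate $\mathcal F_{s-2}(n)$ from $\mathcal F_{s-1}(n)$, the error must be below $(c_{s-1,r}-c_{s-2,r})\,n^{(r-1)^2/r}$.

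A second problem in the same step: $e(\mathcal H_2)=O(n^{r-2})$ does not follow from $\nu(\mathcal H_2)<s$ and maximum degree below $\varepsilon n^{r-1}$. Covering $\mathcal H_2$ by the at most $(s-1)r$ vertices of a maximum matching only gives $e(\mathcal H_2)<(s-1)r\varepsilon n^{r-1}$. A partial star with $\varepsilon n^{r-1}/2$ edges shows this cannot be improved in general. The Perron-vector remedy you sketch is aimed at the same wrong scale.

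\textbf{How to repair it.} Work at scale $n^{(r-1)^2/r}$, where the weaker edge bound is exactly enough. Lemma~\ref{lem:small-edge-spectral-bound} gives $\rho(\mathcal H_2)\le r(r!)^{-1/r}\bigl((s-1)r\varepsilon\bigr)^{(r-1)/r}n^{(r-1)^2/r}$. Choose $\varepsilon=\varepsilon(s,r)$ so that this is less than $\tfrac12(c_{s-1,r}-c_{s-2,r})n^{(r-1)^2/r}$; then $|T|\le s-2$ contradicts $\rho(\mathcal H)\ge\rho(\mathcal F_{s-1}(n))$ for large $n$. Your Step 3 greedy argument then correctly shows that $\mathcal H$ lies inside the copy of $\mathcal F_{s-1}(n)$ centred on $T$.

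\textbf{Comparison with the paper.} Once repaired, nothing in Steps 2--3 uses shiftedness. The inequality and the equality case (via strict monotonicity, Lemma~\ref{lem:subgraph}) then hold for arbitrary $\mathcal H$ directly. This makes Step 1 and your rather sketchy Step 4 unnecessary. That is a genuinely different, shorter route than the paper's. The paper reduces to shifted-saturated hypergraphs and grows $\mathcal F_a(n)\subseteq\mathcal H$ one index at a time, using the special edge $e_a$ (where shiftedness yields an $O(n^{r-2})$ error family) together with Lemma~\ref{lem:Fa-comparison}. It then cites the Wang--Peng lemma to undo the shifts for uniqueness.
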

\medskip
A family $\cF$ is called \textit{$t$-intersecting} if $|A\cap B| \geq t$ for all $A, B \in \mathcal{F}$. When $t=1$, we say the family is \textit{intersecting}. Recall that the classical Erd\H{o}s--Ko--Rado theorem states that if $\cF\subseteq \binom{[n]}{r}$ is an intersecting family with $n>2r$, then $|\mathcal{F}| \leq \binom{n-1}{r-1}$, with equality only for families isomorphic to $\mathcal{F}_{1}(n)$. As a consequence of Theorem \ref{thm1}, we obtain a spectral analogue of the classical Erd\H{o}s--Ko--Rado theorem.
\medskip
\begin{cor}
\label{cor1}
Let $\mathcal{F} \subseteq \binom{[n]}{r}$ be an intersecting family. Then for sufficiently large $n$, we have
\begin{align*}
\rho(\mathcal{F}) \leq \rho(\mathcal{F}_{1}(n)).
\end{align*}
Moreover, equality holds only for families isomorphic to $\mathcal{F}_{1}(n)$.
\end{cor}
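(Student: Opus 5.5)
Corollary \ref{cor1} follows directly from Theorem \ref{thm1} with $s=2$: we only need to check that intersecting families are exactly those with matching number less than $2$, and that the uniqueness statement carries over.

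First, we translate the hypothesis. A family $\mathcal{F}\subseteq\binom{[n]}{r}$ is intersecting if and only if no two of its members are disjoint, that is, if and only if $\mathcal{F}$ contains no copy of $M_2^r$. Hence $\nu(\mathcal{F})\le 1<2$, where an empty family has $\nu=0$. We regard $\mathcal{F}$ as an $r$-graph on $[n]$, identifying the hypergraph with its hyperedge set as the paper does. Vertices of $[n]$ not covered by $\mathcal{F}$ are isolated; they do not affect $\rho$, and they are allowed in Theorem \ref{thm1}, which places no connectivity requirement on $\cH$.

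Second, we apply Theorem \ref{thm1} with $s=2$. For all sufficiently large $n$ this gives $\rho(\mathcal{F})\le\rho(\mathcal{F}_{1}(n))$. The threshold on $n$ depends only on $r$, since $s=2$ is fixed.

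Third, we deduce the equality case. If $\rho(\mathcal{F})=\rho(\mathcal{F}_1(n))$, the uniqueness part of Theorem \ref{thm1} gives $\mathcal{F}\cong\mathcal{F}_1(n)$. Conversely, $\mathcal{F}_1(n)$ is the full star at vertex $1$: every member contains $1$, so it is intersecting and attains the bound. Thus equality holds exactly for families isomorphic to $\mathcal{F}_1(n)$.

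There is no substantive obstacle here; all the difficulty lies in Theorem \ref{thm1} itself. The only points needing care are the identification $\nu<2$ $\Leftrightarrow$ intersecting, and the harmless treatment of isolated vertices and of the empty family.
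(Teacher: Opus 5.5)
Your proposal is correct and follows the paper's own route: the paper obtains the corollary by specializing Theorem \ref{thm1} to $s=2$, using that a family $\mathcal{F}\subseteq\binom{[n]}{r}$ is intersecting exactly when $\nu(\mathcal{F})<2$. The equality case then comes directly from the uniqueness part of Theorem \ref{thm1}, as you describe.
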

The rest of the paper is organized as follows. Section 2.1 is devoted to the properties of shifting, and Section 2.2 provides several auxiliary inequalities that will be used later. With the preparations in Section 2, we give the complete proof of Theorem \ref{thm1} in Section 3. 

\section{Preliminaries}
In this section, we establish the main tools required for our subsequent proofs. We begin by defining the shifting operation in the first subsection. Subsequently, the second subsection is devoted to several key lemmas concerning spectral radius estimates for hypergraphs.

\subsection{The shifting operation}\label{sec:shifting}
The shifting method was originally introduced by Erd\H{o}s, Ko and Rado \cite{EKR} and further developed by Frankl \cite{Fr111}. It has been widely applied to solve extremal problems in Combinatorics.

Let $\mathcal{H}$ be an $r$-graph on the vertex set $[n]$. For integers $i,j$ with $1 \leq i < j \leq n$ and any $e \in E(\mathcal{H})$, the \textit{shifting operator} $S_{ij}$ is defined by
\begin{align*}
S_{ij}(e) =
\begin{cases}
(e \setminus \{j\}) \cup \{i\}, & \text{if } j \in e, i \notin e \text{ and } (e \setminus \{j\}) \cup \{i\} \notin E(\mathcal{H}); \\[6pt]
e, & \text{otherwise}.
\end{cases}
\end{align*}
Set
\begin{align*}
S_{ij}(\mathcal{H}) = \{ S_{ij}(e) : e \in E(\mathcal{H})\}.
\end{align*}

An $r$-graph $\mathcal{H}$ is called \textit{shifted} (or \textit{stable}) if $S_{ij}(\mathcal{H}) = \mathcal{H}$ for all $1 \leq i < j \leq n$. For two $r$-sets $A=\{a_1<\cdots<a_r\}$ and $B=\{b_1<\cdots<b_r\}$, write $A\preceq B$ if $a_i\le b_i$ for every $i\in[r]$. If an $r$-graph $\cG$ is shifted, then $B\in E(\cG)$ and $A\preceq B$ imply $A\in E(\cG)$.
In \cite{Fr111}, Frankl proved that any $r$-graph $\mathcal{H}$ can be transformed into a shifted $r$-graph by applying the shifting operation iteratively. In the same paper, Frankl also showed that the shifting operation does not increase the matching number of an $r$-graph.

\begin{lem}[\cite{Fr111}]\label{lem2.1}
Let $s \geq 2$ be an integer. Let $\mathcal{H}$ be an $r$-uniform graph on $[n]$ with $\nu(\mathcal{H}) < s$. Let $1 \leq i < j \leq n$ be a pair of vertices, then $\nu(S_{ij}(\mathcal{H})) < s$.
\end{lem}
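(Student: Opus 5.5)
We argue by contradiction: assume $\nu(S_{ij}(\mathcal{H}))\ge s$ and produce a matching of size $s$ in $\mathcal{H}$ itself. Fix a matching $M=\{F_1,\dots,F_s\}\subseteq S_{ij}(\mathcal{H})$.

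First classify the edges of $S_{ij}(\mathcal{H})$. Each $F\in S_{ij}(\mathcal{H})$ either already lies in $E(\mathcal{H})$, or it is a \emph{new} edge. A new edge arises as $F=S_{ij}(e)=(e\setminus\{j\})\cup\{i\}$ for some $e\in E(\mathcal{H})$ with $j\in e$ and $i\notin e$. For such $F$ we have $i\in F$, $j\notin F$, $F\notin E(\mathcal{H})$, and the preimage $e=(F\setminus\{i\})\cup\{j\}$ lies in $E(\mathcal{H})$. Since the $F_k$ are pairwise disjoint, at most one of them contains $i$. Hence at most one $F_k$ is new. If none is new, $M\subseteq E(\mathcal{H})$ and we already have a contradiction.

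So assume exactly $F_1$ is new, and all of $F_2,\dots,F_s$ lie in both $E(\mathcal{H})$ and $S_{ij}(\mathcal{H})$. Let $e_1=(F_1\setminus\{i\})\cup\{j\}\in E(\mathcal{H})$. There are two cases.

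\emph{Case 1: no $F_k$ with $k\ge2$ contains $j$.} Then $e_1$ is disjoint from $F_2,\dots,F_s$. Indeed, $F_1\setminus\{i\}$ is disjoint from them, and $j$ is not in any of them. So $\{e_1,F_2,\dots,F_s\}$ is a matching of size $s$ in $\mathcal{H}$, a contradiction.

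\emph{Case 2: some $F_k$ with $k\ge 2$, say $F_2$, contains $j$.} Note $i\notin F_2$, since $i\in F_1$. The edge $F_2$ belongs to both $E(\mathcal{H})$ and $S_{ij}(\mathcal{H})$ and contains $j$ but not $i$. Since $F_2\in S_{ij}(\mathcal{H})$ and $F_2$ cannot be the image of a different edge (images of other edges contain $i$), $F_2$ was not moved. By the definition of $S_{ij}$, this forces $F_2'=(F_2\setminus\{j\})\cup\{i\}\in E(\mathcal{H})$.

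Now consider $\{e_1,F_2',F_3,\dots,F_s\}$. The edges $e_1=(F_1\setminus\{i\})\cup\{j\}$ and $F_2'=(F_2\setminus\{j\})\cup\{i\}$ are disjoint, because $F_1\setminus\{i\}$ and $F_2\setminus\{j\}$ are disjoint and neither contains $i$ or $j$. Both are also disjoint from $F_3,\dots,F_s$, since those avoid $i$, $j$ and $F_1\cup F_2$. This gives a matching of size $s$ in $\mathcal{H}$, again a contradiction.

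The only delicate point is Case 2. There we must justify that $F_2$ stayed fixed under $S_{ij}$ and therefore its shift $F_2'$ is already an edge of $\mathcal{H}$. This is exactly what the rule ``$S_{ij}(e)=e$ when $(e\setminus\{j\})\cup\{i\}\in E(\mathcal{H})$'' provides.
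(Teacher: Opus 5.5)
Your proof is correct. The key step is the observation in Case~2 that an edge of $S_{ij}(\mathcal{H})$ containing $j$ but not $i$ must have its shift $(F_2\setminus\{j\})\cup\{i\}$ already in $\mathcal{H}$, which lets you swap $i$ and $j$ between $F_1$ and $F_2$. The paper does not prove this lemma itself; it cites Frankl~\cite{Fr111}, and your argument is essentially the standard proof given there.
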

\medskip

 Let $\cS(\cH)$ denote a shifted $r$-graph obtained from \(\mathcal{H}\) via a finite sequence of shifting operations. The following lemma demonstrates that the shifting operation does not decrease the spectral radius of an $r$-graph $\mathcal{H}$. To facilitate the proof, for any vertices $u, v \in V(\mathcal{H})$,  define $$E_u(\mathcal{H})=\{e \in E(\mathcal{H}): u \in e\}$$ and  $$E_{u \backslash v}(\mathcal{H})=\{e \in E(\mathcal{H}): u \in e, v \notin e\}.$$
\vspace{-3mm}
\begin{lem}\label{newlem}
For a given $r$-graph $\cH$ on $[n],$ we have $\rho(\cS(\cH))\geq \rho(\cH).$
\end{lem}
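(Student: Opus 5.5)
It suffices to show that a single shifting step does not decrease the spectral radius, i.e. $\rho(S_{ij}(\cH))\ge \rho(\cH)$ for every pair $1\le i<j\le n$. Indeed, $\cS(\cH)$ is obtained from $\cH$ by a finite sequence of such operations, so the general statement then follows by induction on the number of steps. The tool is the variational characterization in Lemma~\ref{lem:Qi}. We take a nonnegative eigenvector $\mathbf{x}$ of $\cH$ for $\rho(\cH)$ with $\|\mathbf{x}\|_r=1$; it exists by Theorem~\ref{P-F-thm}(i), and connectivity is not needed. Since
\[
\rho(\cH)=\mathbf{x}^{\mathrm T}\cA(\cH)\mathbf{x}=r\sum_{e\in E(\cH)}\prod_{u\in e}x_u ,
\]
it is enough to find a unit nonnegative vector $\mathbf{y}$ with $r\sum_{e\in S_{ij}(\cH)}\prod_{u\in e}y_u\ge r\sum_{e\in \cH}\prod_{u\in e}x_u$.

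We choose $\mathbf{y}$ by cases on the values at $i$ and $j$. If $x_i\ge x_j$, take $\mathbf{y}=\mathbf{x}$. Otherwise let $\mathbf{y}$ be $\mathbf{x}$ with the coordinates $i$ and $j$ swapped. This has the same $r$-norm, and it amounts to comparing $S_{ij}(\cH)$ with $\cH$ relabelled by the transposition $(i\,j)$. In the second case we use that the transposition also gives an isomorphic copy of $\cH$, so $\rho$ is unchanged, and we argue symmetrically. Equivalently, we may assume without loss of generality that $x_i\ge x_j$, by comparing both $\cH$ and $S_{ij}(\cH)$ against the better of the two labellings.

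The key step is an edge-by-edge comparison for the case $x_i\ge x_j$. Edges with $S_{ij}(e)=e$ contribute equally to both sums. An edge $e$ that actually moves has $j\in e$, $i\notin e$, and $e'=(e\setminus\{j\})\cup\{i\}\notin\cH$. It is replaced by $e'$, and
\[
\prod_{u\in e'}x_u=\frac{x_i}{x_j}\prod_{u\in e}x_u\ \ge\ \prod_{u\in e}x_u .
\]
When $x_j=0$ we instead compare directly: $\prod_{e}=0\le\prod_{e'}$. The map $e\mapsto S_{ij}(e)$ is injective, so $S_{ij}(\cH)$ has the same number of edges and the sum over $S_{ij}(\cH)$ dominates the sum over $\cH$. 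Lemma~\ref{lem:Qi} then gives $\rho(S_{ij}(\cH))\ge \mathbf{x}^{\mathrm T}\cA(S_{ij}(\cH))\mathbf{x}\ge\rho(\cH)$.

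The main obstacle is the case $x_i<x_j$: we must make sure the swap argument is legitimate rather than circular. The shift $S_{ij}$ is defined with respect to $\cH$ and is not symmetric in $i,j$. The plan is to check injectivity of the swap correspondence directly. Under the swapped vector $\mathbf{y}$, a moved edge $e\ni j$ becomes $e'\ni i$ with $y$-weight equal to the $x$-weight of $e$. For each unmoved edge $f$ we compare the $y$-weight with the $x$-weight, pairing $f$ with its image under the transposition. The unmoved edges that contain exactly one of $i,j$ come in pairs: an edge $f\ni j$, $i\notin f$, stays only because $f'\in\cH$, and then $f,f'$ both lie in $S_{ij}(\cH)$. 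Such a pair has the same total weight under $\mathbf{x}$ and $\mathbf{y}$. Edges containing $j$ but not $i$ that were moved are handled as above. The leftover terms come from edges $g\ni i$, $j\notin g$ whose partner $(g\setminus\{i\})\cup\{j\}$ is not in $\cH$; they appear with weight factor $x_j\ge x_i$ under $\mathbf{y}$ and therefore only help. Completing this bookkeeping shows $\sum_{S_{ij}(\cH)}\mathbf{y}\ge\sum_{\cH}\mathbf{x}$, which finishes the proof.
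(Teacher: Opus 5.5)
Your argument is correct, but it takes a different route from the paper.

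The paper never meets the case $x_i<x_j$. It fixes one nonnegative eigenvector $\mathbf{x}$ of the original $\cH$ and relabels the vertices so that $x_1\ge x_2\ge\cdots\ge x_n$. It then performs the entire shifting sequence in this labeling. Every shift $S_{ij}$ with $i<j$ therefore moves weight toward a coordinate at least as large, so the single form $\mathbf{x}^{\mathrm T}\cA(\cH_k)\mathbf{x}$ is nondecreasing along the whole chain. Lemma~\ref{lem:Qi} is invoked only once, at the end.

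You instead prove the stronger local statement $\rho(S_{ij}(\cH))\ge\rho(\cH)$ for every pair $i<j$, taking a fresh eigenvector at each step and handling $x_i<x_j$ with the swapped test vector. Your bookkeeping there is right. Group edges by the $(r-1)$-set $T$ they share outside $\{i,j\}$. If both $T\cup\{i\}$ and $T\cup\{j\}$ lie in $\cH$, or only $T\cup\{j\}$ does, the weights agree. If only $T\cup\{i\}$ lies in $\cH$, you gain the factor $x_j\ge x_i$. In effect you are showing that $S_{ij}(\cH)$ is the image, under the transposition $(i\,j)$, of the hypergraph obtained from $\cH$ by shifting $i$ toward $j$. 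This is the same trick used for the Kelmans transformation of graphs. Your second paragraph's ``without loss of generality'' is too loose to stand on its own; it is the explicit comparison in your last paragraph that carries the proof.

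The paper's route is shorter, but the shifted hypergraph it produces depends on an eigenvector-driven relabeling. Strictly, it proves the bound for a suitably chosen $\cS(\cH)$, which is all the later application needs. Yours costs a case analysis, but it gives monotonicity of $\rho$ along every shifting sequence in the original labeling, so it proves the lemma for an arbitrary $\cS(\cH)$.
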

\begin{proof}[\bf Proof]

For a nonnegative eigenvector ${\bf x}=(x_{1},x_{2},\dots,x_{n})^\mathrm{T}$ corresponding to $\rho(\cH)$, relabel the vertices of $\mathcal H$ so that $x_1\ge x_2\ge \cdots \ge x_n$. We perform a sequence of shifting operations under this labeling:
\begin{equation*}
\cH=\cH_0\to \cH_1\to \cdots \to \cH_t=\cS(\cH).
\end{equation*}

Without loss of generality, suppose $\mathcal{H}_{k+1}=S_{ij}(\mathcal{H}_k)$ for some $0\le k\le t-1$ and $i<j\in[n]$. By $x_i\ge x_j$, one can check that
\begin{align*}
{\bf x}^\mathrm{T}\cA(\cH_{k+1}){\bf x} - {\bf x}^\mathrm{T}\cA(\cH_{k}){\bf x} &= \sum_{\{i_1,\dots,i_r\} \in E(\cH_{k+1})} r x_{i_1}x_{i_2}\cdots x_{i_r} - \sum_{\{i_1,\dots,i_r\} \in E(\cH_k)} r x_{i_1}x_{i_2}\cdots x_{i_r} \\[2mm]
&= \sum_{\{i_1,\dots,i_r\} \in E(S_{ij}(\cH_k))} r x_{i_1}x_{i_2}\cdots x_{i_r} - \sum_{\{i_1,\dots,i_r\} \in E(\cH_k)} r x_{i_1}x_{i_2}\cdots x_{i_r} \\[2mm]
&= \sum_{\substack{\{j,i_2,\dots,i_r\} \in E_{j\setminus i}(\cH_k) \\ \{i,i_2,\dots,i_r\} \notin E(\cH_k)}} r(x_i - x_j) x_{i_2}\cdots x_{i_r}\geq 0.
\end{align*}
Since the inequality ${\bf x}^\mathrm{T}\cA(\cH_{k+1}){\bf x}\geq {\bf x}^\mathrm{T}\cA(\cH_{k}){\bf x}$ holds for every $0\le k\le t-1$, by iterating over the full shifting sequence, we obtain
\begin{equation*}
{\bf x}^\mathrm{T}\cA(\cS(\cH)){\bf x}\ge {\bf x}^\mathrm{T}\cA(\cH){\bf x}.
\end{equation*}
Using Lemma \ref{lem:Qi}, it follows that
\begin{align*}
    \rho\big(\mathcal{S}(\mathcal{H})\big)
    =\max_{\substack{{\bf y}\ge \boldsymbol0\\  \|{\bf y}\|_r=1}}{\bf y}^\mathrm{T}\cA(\cS(\cH)){\bf y}
    \ge {\bf x}^\mathrm{T}\cA(\cS(\cH)){\bf x}
    \ge {\bf x}^\mathrm{T}\cA(\cH){\bf x}=\rho(\mathcal{H}).
\end{align*}
The result follows.
\end{proof}

\subsection{Auxiliary lemmas}
We begin this subsection by establishing an upper bound for the spectral radius of an $r$-graph in terms of its number of hyperedges.

\begin{lem}\label{lem:small-edge-spectral-bound}
Let $\cH$ be an $n$-vertex $r$-graph with $m$ hyperedges. Then $$\rho(\cH)\le r(r!)^{-1/r} m^{(r-1)/r}.$$
\end{lem}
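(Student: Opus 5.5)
The plan is to use the variational characterization in Lemma~\ref{lem:Qi} and bound the multilinear form ${\bf x}^{\mathrm T}\cA(\cH){\bf x}$ by Maclaurin's inequality applied to the edge products. Let ${\bf x}\in\mathbb{R}^n_+$ with $\sum_i x_i^r=1$ attain $\rho(\cH)$. By the definition of $\cA(\cH)$, each hyperedge contributes $r!\cdot\frac{1}{(r-1)!}$ times its product, so
\begin{equation*}
\rho(\cH)={\bf x}^{\mathrm T}\cA(\cH){\bf x}= r\sum_{e\in E(\cH)}\prod_{u\in e}x_u .
\end{equation*}
It therefore suffices to show $\sum_{e\in E(\cH)}\prod_{u\in e}x_u\le (r!)^{-1/r}m^{(r-1)/r}$.

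First I apply the AM--GM inequality to each hyperedge: $\prod_{u\in e}x_u=\big(\prod_{u\in e}x_u^r\big)^{1/r}$. Next I apply H\"older's inequality (power-mean form) over the $m$ hyperedges:
\begin{equation*}
\sum_{e}\Big(\prod_{u\in e}x_u^r\Big)^{1/r}\le m^{(r-1)/r}\Big(\sum_{e}\prod_{u\in e}x_u^r\Big)^{1/r}.
\end{equation*}
Finally, setting $y_u=x_u^r$, we have $\sum_u y_u=1$. The sum $\sum_{e}\prod_{u\in e}y_u$ is at most the full elementary symmetric polynomial $e_r({\bf y})$, since the $y_u$ are nonnegative. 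By Maclaurin's inequality, or simply by expanding $(\sum_u y_u)^r$ and keeping only the terms with distinct indices,
\begin{equation*}
e_r({\bf y})\le \frac{(\sum_u y_u)^r}{r!}=\frac{1}{r!}.
\end{equation*}
Combining these three steps gives $\sum_e\prod_{u\in e}x_u\le m^{(r-1)/r}(r!)^{-1/r}$, and multiplying by $r$ yields the claimed bound on $\rho(\cH)$.

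No step is a genuine obstacle. The only points needing care are the normalization constant relating ${\bf x}^{\mathrm T}\cA{\bf x}$ to the edge sum, namely the factor $r$, and the observation that the $r!$ ordered terms with distinct indices in $(\sum y_u)^r$ each correspond to one unordered $r$-set, which gives the factor $1/r!$.
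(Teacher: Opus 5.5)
Your proof is correct and follows the paper's argument step for step: the variational formula with the factor $r$, H\"older's inequality over the $m$ hyperedges, and the bound $\sum_{e}\prod_{u\in e}x_u^r\le e_r(\mathbf{y})\le 1/r!$. The paper gets this last bound from Maclaurin's inequality, $\binom{n}{r}n^{-r}\le 1/r!$, whereas your direct expansion of $(\sum_u y_u)^r$ is an equally valid and slightly more elementary justification. One cosmetic point: the identity $\prod_{u\in e}x_u=(\prod_{u\in e}x_u^r)^{1/r}$ is not an application of AM--GM, so you can drop that label.
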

\begin{proof}[\bf Proof]
By Lemma \ref{lem:Qi}, we have
\begin{equation*}
\rho(\cH)=\max_{\substack{{\bf x}\ge \boldsymbol0\\\|{\bf x}\|_r=1}} {\bf x}^\mathrm{T}\cA(\cH){\bf x}=\max_{\substack{{\bf x}\ge \boldsymbol0\\\|{\bf x}\|_r=1}} r\sum_{e\in E(\cH)}\prod_{v\in e}x_v.
\end{equation*}
For any nonnegative real vector ${\bf x}$, using H\"{o}lder's inequality, it follows that
\begin{equation*}
\sum_{e\in E(\cH)}\prod_{v\in e}x_v \le m^{(r-1)/r}\left(\sum_{e\in E(\cH)}\prod_{v\in e}x_v^r\right)^{1/r}.
\end{equation*}

Since $\sum_{v\in V(\cH)}x_v^r=1$, applying Maclaurin's inequality, we obtain
\begin{align*}
\sum_{e\in E(\cH)}\prod_{v\in e}x_v^r &\le \sum_{e\in\binom{V(\cH)}{r}}\prod_{v\in e}x_v^r \le \binom{n}{r} n^{-r} \le \frac{1}{r!}.
\end{align*}
Therefore,
$$\rho(\cH)\le r(r!)^{-1/r} m^{(r-1)/r}.$$
This finishes the proof.
\end{proof}

\medskip
\begin{lem}[\cite{Co-Du},  \cite{Ni}]\label{lem:subgraph}
Let $\mathcal{G}$ and $\mathcal{H}$ be $r$-graphs. If $\cG$ is a subgraph of $\cH$, then $\rho(\cG)\leq \rho(\cH).$
Moreover, if $\cH$ is connected, and $\cG$ is a proper subgraph of $\cH$, then $\rho(\cG) < \rho(\cH).$
\end{lem}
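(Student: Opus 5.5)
The monotonicity claim follows directly from the variational characterization in Lemma \ref{lem:Qi}. The strict claim needs the Perron--Frobenius theory of Theorem \ref{P-F-thm}. We regard $\cG$ and $\cH$ on a common vertex set $V$ of size $n$, adding isolated vertices to $\cG$ if necessary; isolated vertices change neither ${\bf x}^\mathrm{T}\cA{\bf x}$ nor the spectral radius. For every nonnegative ${\bf x}$ with $\|{\bf x}\|_r=1$ we have
\begin{equation*}
{\bf x}^\mathrm{T}\cA(\cG){\bf x}=r\sum_{e\in E(\cG)}\prod_{v\in e}x_v\le r\sum_{e\in E(\cH)}\prod_{v\in e}x_v={\bf x}^\mathrm{T}\cA(\cH){\bf x},
\end{equation*}
because $E(\cG)\subseteq E(\cH)$ and every term is nonnegative. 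Taking the maximum over ${\bf x}$ on both sides and applying Lemma \ref{lem:Qi} gives $\rho(\cG)\le\rho(\cH)$.

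For strictness, assume $\cH$ is connected and $\cG$ is a proper subgraph. Then $\cA(\cH)$ is weakly irreducible by the result of \cite{Fr-Ga-Ha}, so $\cH$ has a positive Perron vector ${\bf y}\in\mathbb{R}^n_{++}$ with $\|{\bf y}\|_r=1$. Let ${\bf x}\ge\boldsymbol 0$ with $\|{\bf x}\|_r=1$ be a maximizer for $\cG$ in Lemma \ref{lem:Qi}, so that $\rho(\cG)={\bf x}^\mathrm{T}\cA(\cG){\bf x}$. Suppose, for contradiction, that $\rho(\cG)=\rho(\cH)$. Then
\begin{equation*}
\rho(\cH)\ge{\bf x}^\mathrm{T}\cA(\cH){\bf x}\ge{\bf x}^\mathrm{T}\cA(\cG){\bf x}=\rho(\cG)=\rho(\cH),
\end{equation*}
so ${\bf x}$ maximizes the form of $\cH$ on the nonnegative part of the unit $\ell_r$-sphere.

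The key step, and the main obstacle, is to show that any such maximizer is an eigenvector. We apply the Lagrange multiplier (KKT) conditions for maximizing ${\bf x}^\mathrm{T}\cA(\cH){\bf x}$ subject to $\sum_i x_i^r=1$ and $x_i\ge0$. These give $(\cA(\cH){\bf x})_i=\rho(\cH)x_i^{r-1}$ at every coordinate with $x_i>0$. At a coordinate with $x_i=0$, the multiplier for the constraint $x_i\ge 0$ must absorb the gradient component $r(\cA(\cH){\bf x})_i\ge0$; since the maximum is attained there, this forces $(\cA(\cH){\bf x})_i\le0$, hence $(\cA(\cH){\bf x})_i=0$. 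So ${\bf x}$ is a nonnegative eigenvector for $\rho(\cH)$.

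Next we show ${\bf x}$ is positive. Let $I=\{i: x_i=0\}$ and suppose $\emptyset\ne I\ne[n]$. Since $\cH$ is connected, some edge $e$ meets both $I$ and its complement. Choose such an edge $e$ so that, for some $i\in e\cap I$, all other vertices of $e$ lie outside $I$. If no such edge exists, then every vertex $i\in I$ has $(\cA(\cH){\bf x})_i=0$ with each incident edge containing another zero coordinate; in that case we instead derive the contradiction from Theorem \ref{P-F-thm}(ii) applied to ${\bf x}+\varepsilon{\bf y}$ after normalizing. Rather than handle this configuration by hand, we can appeal to the known fact for weakly irreducible nonnegative tensors that every nonnegative eigenvector belonging to $\rho$ is positive. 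This is essentially what \cite{Fr-Ga-Ha} proves, and we would cite it. Once ${\bf x}>0$, Theorem \ref{P-F-thm}(ii) gives ${\bf x}={\bf y}$ after normalization.

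Finally, since $\cG\ne\cH$, pick $e_0\in E(\cH)\setminus E(\cG)$. Then
\begin{equation*}
{\bf y}^\mathrm{T}\cA(\cH){\bf y}-{\bf y}^\mathrm{T}\cA(\cG){\bf y}\ge r\prod_{v\in e_0}y_v>0.
\end{equation*}
The left-hand side equals $\rho(\cH)-\rho(\cG)=0$, which is a contradiction. Hence $\rho(\cG)<\rho(\cH)$.
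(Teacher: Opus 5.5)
The paper does not prove this lemma; it is quoted from Cooper--Dutle and Nikiforov, so your argument has to stand on its own. Your monotonicity half is complete. The skeleton of the strict half is also right: if $\rho(\cG)=\rho(\cH)$, a maximizer $\mathbf{x}$ for $\cG$ also maximizes the form of $\cH$, and once $\mathbf{x}>0$ the missing edge $e_0$ contributes $r\prod_{v\in e_0}x_v>0$ to the difference.

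The gap is the step you flagged yourself, positivity of $\mathbf{x}$, and it is not closed. Your direct argument needs an edge with exactly one vertex in $I=\{i:x_i=0\}$. When every edge meeting $I$ has at least two vertices in $I$, the equations $(\cA(\cH)\mathbf{x})_i=0$ for $i\in I$ hold automatically, so the first-order information you extracted gives no contradiction. Applying Theorem~\ref{P-F-thm}(ii) to $\mathbf{x}+\varepsilon\mathbf{y}$ does not work, because that vector is not an eigenvector. The ``known fact'' you then invoke is also not what Theorem~\ref{P-F-thm} says: part (ii) gives uniqueness only among \emph{positive} eigenvectors. Nonnegative eigenvectors with zero entries do occur for connected hypergraphs; a single $3$-edge has eigenvector $(1,0,0)$ for eigenvalue $0$. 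So positivity genuinely depends on the eigenvalue being $\rho$, and it needs an argument. The fact is true, but as written the crux is asserted rather than proved.

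Either of two short arguments fills the hole.

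(a) Use maximality to higher order instead of passing to the eigen-equation. If $I\neq\emptyset$, connectivity gives $e\in E(\cH)$ with $1\le k:=|e\cap I|\le r-1$. Raise the coordinates in $e\cap I$ from $0$ to $\varepsilon$. The form of $\cH$ increases by at least $c\varepsilon^k$, where $c=r\prod_{u\in e\setminus I}x_u>0$. The $r$-th power of the $\ell_r$-norm of the perturbed vector becomes $1+k\varepsilon^r$. By homogeneity, the normalized value is at least $(\rho(\cH)+c\varepsilon^k)/(1+k\varepsilon^r)$. This exceeds $\rho(\cH)$ for small $\varepsilon>0$ because $k<r$, contradicting maximality. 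This route makes both the KKT step and the appeal to uniqueness unnecessary.

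(b) Alternatively, keep your KKT step and compare with the Perron vector $\mathbf{y}$; this is in effect a proof of the fact you cited. Rescale $\mathbf{x}$ so that $\mathbf{x}\le\mathbf{y}$ with equality on a nonempty set $J$. For $i\in J$,
$\rho x_i^{r-1}=\sum_{e\ni i}\prod_{u\in e\setminus\{i\}}x_u\le\sum_{e\ni i}\prod_{u\in e\setminus\{i\}}y_u=\rho y_i^{r-1}=\rho x_i^{r-1}$.
Since $\mathbf{y}>0$, equality forces $x_u=y_u$ for every vertex $u$ of every edge through $i$. Hence every edge meeting $J$ lies inside $J$. By connectivity $J=V$, so $\mathbf{x}=\mathbf{y}>0$.
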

\medskip
Define
\begin{equation*}
c_{a,r} =
\begin{cases}
0, & \text{if } a = 0, \\
\frac{a^{(r-1)/r}}{(r-1)^{1/r}(r-2)!}, & \text{if } a \ge 1.
\end{cases}
\end{equation*}

Recall the definition of $\mathcal{F}_a(n)$. The following lemma gives an estimate for $\rho(\mathcal{F}_a(n)).$

\begin{lem}\label{lem:Fa-asymptotic}
For every fixed integer $a\ge 0$ and sufficiently large n, we have
\begin{equation*}
\rho(\mathcal{F}_a(n))=c_{a,r}n^{(r-1)^2/r}+O\left(n^{(r-2)(r-1)/r}\right).
\end{equation*}
\end{lem}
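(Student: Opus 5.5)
For $a=0$ the family $\mathcal{F}_0(n)$ is empty, so $\rho=0=c_{0,r}$ and there is nothing to prove. We may therefore assume $a\ge 1$ and $n$ large.

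The plan is to reduce the eigen-equation of $\mathcal{F}_a(n)$ to two scalar equations using its symmetry, and then solve them asymptotically.

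\textbf{Step 1: a two-valued Perron vector.} For $a\ge1$, every vertex lies in an edge meeting $[a]$, so $\mathcal{F}_a(n)$ is connected. By Theorem \ref{P-F-thm}(ii) it has a unique positive Perron vector $\mathbf{x}$. Any permutation of $[n]$ fixing $[a]$ setwise is an automorphism of $\mathcal{F}_a(n)$. By uniqueness, $\mathbf{x}$ is invariant under all such permutations. Hence $x_v=x$ for every $v\in[a]$ and $x_u=y$ for every $u\in[a+1,n]$, with $x,y>0$.

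\textbf{Step 2: the two scalar equations.} Counting the edges through a vertex according to how many further vertices they contain in $[a]$, the eigen-equations become
\begin{align*}
\rho x^{r-1}&=\sum_{k=0}^{r-1}\binom{a-1}{k}\binom{n-a}{r-1-k}x^k y^{r-1-k},\\
\rho y^{r-1}&=\sum_{k=1}^{r-1}\binom{a}{k}\binom{n-a-1}{r-1-k}x^k y^{r-1-k}.
\end{align*}
Put $t=x/y$ and divide both equations by $y^{r-1}$. This gives
\[
\rho t^{r-1}=\frac{n^{r-1}}{(r-1)!}\bigl(1+O(t/n)+O(1/n)\bigr),\qquad
\rho=\frac{a\,n^{r-2}}{(r-2)!}\,t\,\bigl(1+O(t/n)+O(1/n)\bigr).
\]
In the first equation the $k=0$ term dominates; in the second the $k=1$ term dominates. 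Each further power of $x$ in place of $y$ costs a factor of order $t/n$.

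\textbf{Step 3: a priori size of $t$.} To justify these expansions we need $t=o(n)$. I would argue directly from the equations, which I expect to be the main (though mild) obstacle. If $t\ge \varepsilon n$, the second equation forces $\rho\gtrsim t^{r-1}$, since the top term $k=r-1$ is present. The first equation then gives $\rho t^{r-1}\lesssim (n+t)^{r-1}$, so $t^{2(r-1)}\lesssim (n+t)^{r-1}$. This is impossible for large $n$ when $t\ge\varepsilon n$. Bootstrapping once more gives $t=\Theta(n^{1/r})$.

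\textbf{Step 4: solve and collect errors.} Dividing the two equations yields
\[
t^{r}=\frac{n}{a(r-1)}\bigl(1+O(n^{-(r-1)/r})\bigr).
\]
Here we used $t/n=O(n^{1/r-1})$, which dominates $1/n$. Substituting back gives
\[
\rho=\frac{a n^{r-2}}{(r-2)!}\Bigl(\frac{n}{a(r-1)}\Bigr)^{1/r}\bigl(1+O(n^{-(r-1)/r})\bigr)=c_{a,r}\,n^{(r-1)^2/r}\bigl(1+O(n^{-(r-1)/r})\bigr).
\]
The absolute error is therefore $O\bigl(n^{(r-1)^2/r-(r-1)/r}\bigr)=O\bigl(n^{(r-1)(r-2)/r}\bigr)$, as claimed.
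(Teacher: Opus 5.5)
Your argument is correct in outline, but it takes a different route from the paper. The paper never writes down the full eigen-equations of $\mathcal{F}_a(n)$. It applies the symmetry argument only to the subgraph $\mathcal{S}_a(n)$ of edges meeting $[a]$ in exactly one vertex. There each eigen-equation has a single term, so $\rho(\mathcal{S}_a(n))^r=\binom{n-a}{r-1}\bigl(a\binom{n-a-1}{r-2}\bigr)^{r-1}$ holds exactly. The remainder $\mathcal{D}_a=\mathcal{F}_a(n)\setminus\mathcal{S}_a(n)$ has $O(n^{r-2})$ edges and is bounded by the H\"older/Maclaurin estimate of Lemma \ref{lem:small-edge-spectral-bound}. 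Monotonicity and the variational formula then give $\rho(\mathcal{S}_a(n))\le\rho(\mathcal{F}_a(n))\le\rho(\mathcal{S}_a(n))+\rho(\mathcal{D}_a)$.

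The paper's route needs no a priori control of the ratio $x/y$, and it reuses a lemma that is needed later anyway (in Lemma \ref{lem:Fa-comparison}). Your route is self-contained and avoids subadditivity. It also pinpoints where the $n^{(r-1)(r-2)/r}$ error comes from (the $k=2$ terms), at the cost of the bootstrap in Step 3.

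That bootstrap has a slip as written. The $k=r-1$ term in your second equation has coefficient $\binom{a}{r-1}$, which vanishes when $a<r-1$. This includes $a=1$, $r\ge 3$, which is exactly the case behind Corollary \ref{cor1}. So "$\rho\gtrsim t^{r-1}$" is not available in general.

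The repair is to use the $k=1$ term, which is always present: $\rho\ge a\binom{n-a-1}{r-2}\,t\gtrsim n^{r-2}t$. Combined with $\rho t^{r-1}\lesssim (n+t)^{r-1}$ from the first equation, this gives $n^{r-2}t^{r}\lesssim (n+t)^{r-1}$.
\begin{itemize}
\item If $t\ge n$, the right side is $\lesssim t^{r-1}$, so $n^{r-2}t\lesssim 1$ and hence $n^{r-1}\lesssim 1$, which is impossible for large $n$.
\item If $t\le n$, it gives $t^r=O(n)$.
\end{itemize}
Thus $t=O(n^{1/r})$ follows directly, with no bootstrapping, and your Step 4 goes through unchanged.
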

\begin{proof}[\bf Proof]
If $a=0$, then $\mathcal{F}_0(n)=\emptyset$ and the conclusion is immediate. Hence we may assume that $a\ge 1$.
Let $\mathcal{S}_a(n)$ be the subgraph of $\mathcal{F}_a(n)$ consisting of all hyperedges intersecting $[a]$ in exactly one vertex.

We begin by estimating $\rho\left(\mathcal{S}_a(n)\right)$.
Since $n-a\geq r-1$, by the definition of $\mathcal{F}_a(n)$, $\mathcal{S}_a(n)$ is connected. Let ${\bf x}$ be the Perron vector corresponding to $\rho(\mathcal{S}_a(n))$. Observe that any permutation of the vertices within $\{1, \dots, a\}$ or within $\{a+1, \dots, n\}$ preserves the edge set of $\mathcal{S}_a(n)$. Since the positive Perron vector ${\bf x}$ is unique up to scaling, its components must be invariant under these automorphisms. Consequently, we have $x_1 = \cdots = x_a$ and $x_{a+1} = \cdots = x_n$. Set
\begin{align*}
x_i = \alpha  \text{ for } 1\le i\le a, \quad \text{and}\quad x_j = \beta,\text{ for } a+1\le j\le n.
\end{align*}
From the eigenequations for $\rho(\mathcal{S}_a(n))$ at the vertices $x_i$, where $1\le i\le a$, it follows that
\[\rho(\mathcal{S}_a(n))\alpha^{r-1}=\binom{n-a}{r-1}\beta^{r-1}.\label{111}\]
From the eigenequations for $\rho(\mathcal{S}_a(n))$ at the vertices $x_j$, where $a+1\le j\le n$, then
\[\rho(\mathcal{S}_a(n))\beta^{r-1}=a\binom{n-a-1}{r-2}\alpha\beta^{r-2}.\label{222}\]
From (\ref{222}), we have
\begin{equation*}
\frac{\alpha}{\beta} = \frac{\rho(\mathcal{S}_a(n))}{a\binom{n-a-1}{r-2}}.
\end{equation*}
Substituting this into (\ref{111}), we obtain
\begin{equation}
\rho(\mathcal{S}_a(n))=\frac{a^{(r-1)/r}}{(r-1)^{1/r}(r-2)!}n^{(r-1)^2/r}+O\left(n^{(r-1)^2/r-1}\right).\label{jia}
\end{equation}

Now we estimate $\rho\left(\mathcal{F}_a(n)\right)$.
Recall that $\mathcal{S}_a(n)\subseteq\mathcal{F}_a(n)$. Let $\cD_a=\mathcal{F}_a(n)\setminus\mathcal{S}_a(n)$. Every hyperedge of $\cD_a$ intersects $[a]$ in at least two vertices, so $e(\cD_a)=O(n^{r-2})$. By Lemma~\ref{lem:small-edge-spectral-bound},
\begin{equation*}
\rho(\cD_a)=O\left(n^{(r-2)(r-1)/r}\right).
\end{equation*}
Since $\mathcal{S}_a(n)$ is a subgraph of $\mathcal{F}_a(n)$, Lemma~\ref{lem:subgraph} gives
\begin{align}
    \rho(\mathcal{F}_a(n)) \ge \rho(\mathcal{S}_a(n)),\label{ll}
\end{align}
which establishes the lower bound. For the upper bound, let ${\bf y}=(y_v)_{v\in [n]}$ be the Perron vector corresponding to $\rho(\mathcal{F}_a(n))$.  A further calculation shows that
\begin{align}
\rho(\mathcal{F}_a(n))&={\bf y}^\mathrm{T}\cA(\mathcal{F}_a(n)){\bf y} \notag\\[2mm]
&=r\sum_{e\in E(\mathcal{F}_a(n))}\prod_{v\in e}y_v \notag\\[2mm]
&= r\sum_{e\in E(\mathcal{S}_a(n))}\prod_{v\in e}y_v+r\sum_{e\in E(\cD_a)}\prod_{v\in e}y_v \notag\\[2mm]
&= {\bf y}^\mathrm{T}\cA(\mathcal{S}_a(n)){\bf y}+{\bf y}^\mathrm{T}\cA(\cD_a){\bf y} \leq \rho(\mathcal{S}_a(n))+\rho(\cD_a) \notag\\[2mm]
&= \frac{a^{(r-1)/r}}{(r-1)^{1/r}(r-2)!}n^{(r-1)^2/r}+O\left(n^{(r-2)(r-1)/r}\right).\label{uu}
\end{align}
Combining (\ref{jia}), (\ref{ll}) and (\ref{uu}), the desired equality follows.
\end{proof}


We write $\mathcal{G} \cup \mathcal{F}$ for the \textit{union} of two $r$-graphs $\mathcal{G}$ and $\mathcal{F}$, an $r$-graph with vertex set $V(\mathcal{G}) \cup V(\mathcal{F})$ and edge set $E(\mathcal{G}) \cup E(\mathcal{F})$.
\begin{lem}\label{lem:Fa-comparison}
Let $0\le a<s-1$ and $K:=K(s,r)>0$. Suppose $\cG$ is an $r$-graph satisfying $e(\cG)\le K n^{r-2}$. Then for sufficiently large $n$, we have
\begin{equation*}
\rho(\mathcal{F}_a(n)\cup \cG)<\rho(\mathcal{F}_{s-1}(n)).
\end{equation*}
\end{lem}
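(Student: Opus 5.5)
The plan is to bound $\rho(\mathcal{F}_a(n)\cup\cG)$ from above by splitting the quadratic form over its two edge sets, exactly as in the upper-bound part of the proof of Lemma~\ref{lem:Fa-asymptotic}. Then we compare the result with the lower bound for $\rho(\mathcal{F}_{s-1}(n))$ given by Lemma~\ref{lem:Fa-asymptotic}. Since $a<s-1$, the leading constants satisfy $c_{a,r}<c_{s-1,r}$ (including the case $a=0$, where $c_{0,r}=0<c_{s-1,r}$ because $s\ge 2$). The $n^{(r-1)^2/r}$ term will therefore dominate every error term once $n$ is large.

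\textbf{Step 1.} Let ${\bf y}\ge\boldsymbol0$ with $\|{\bf y}\|_r=1$ attain $\rho(\mathcal{F}_a(n)\cup\cG)$ in Lemma~\ref{lem:Qi}. If an edge lies in both $\mathcal{F}_a(n)$ and $\cG$, we may replace $\cG$ by $\cG\setminus\mathcal{F}_a(n)$; this does not change the union and only decreases $e(\cG)$. Hence
\begin{equation*}
\rho(\mathcal{F}_a(n)\cup\cG)={\bf y}^{\mathrm T}\cA(\mathcal{F}_a(n)){\bf y}+{\bf y}^{\mathrm T}\cA(\cG){\bf y}\le \rho(\mathcal{F}_a(n))+\rho(\cG).
\end{equation*}
The vertex set of $\cG$ is irrelevant here: we can view $\cG$ on $[n]$ by adding isolated vertices, which does not affect the variational formula.

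\textbf{Step 2.} Lemma~\ref{lem:small-edge-spectral-bound} with $m\le Kn^{r-2}$ gives
\begin{equation*}
\rho(\cG)\le r(r!)^{-1/r}K^{(r-1)/r}n^{(r-2)(r-1)/r}=O\bigl(n^{(r-2)(r-1)/r}\bigr).
\end{equation*}
Lemma~\ref{lem:Fa-asymptotic} gives
\begin{equation*}
\rho(\mathcal{F}_a(n))=c_{a,r}n^{(r-1)^2/r}+O\bigl(n^{(r-2)(r-1)/r}\bigr),
\end{equation*}
where the implied constant depends only on $a,r$, and $a$ ranges over finitely many values below $s-1$. Together these give
\begin{equation*}
\rho(\mathcal{F}_a(n)\cup\cG)\le c_{a,r}n^{(r-1)^2/r}+C_1n^{(r-2)(r-1)/r}.
\end{equation*}

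\textbf{Step 3.} Lemma~\ref{lem:Fa-asymptotic} also gives
\begin{equation*}
\rho(\mathcal{F}_{s-1}(n))\ge c_{s-1,r}n^{(r-1)^2/r}-C_2n^{(r-2)(r-1)/r}.
\end{equation*}
The difference of the two exponents is
\begin{equation*}
\frac{(r-1)^2}{r}-\frac{(r-2)(r-1)}{r}=\frac{r-1}{r}>0.
\end{equation*}
Because $c_{a,r}$ is strictly increasing in $a$ (it is proportional to $a^{(r-1)/r}$), the gap $(c_{s-1,r}-c_{a,r})n^{(r-1)^2/r}$ exceeds $(C_1+C_2)n^{(r-2)(r-1)/r}$ for all sufficiently large $n$. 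This yields the strict inequality.

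The argument is routine. The only point needing care is that all the $O(\cdot)$ constants depend solely on $s$, $r$ and $K$, and not on $\cG$. This holds because Lemma~\ref{lem:small-edge-spectral-bound} is fully explicit, so "sufficiently large $n$" is uniform over all admissible $\cG$.
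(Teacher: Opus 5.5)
Your proposal is correct and follows the paper's proof essentially step for step. Like the paper, you use the subadditivity bound $\rho(\mathcal{F}_a(n)\cup\cG)\le\rho(\mathcal{F}_a(n))+\rho(\cG)$, control $\rho(\cG)$ by Lemma~\ref{lem:small-edge-spectral-bound}, and use the asymptotics of Lemma~\ref{lem:Fa-asymptotic} with $c_{a,r}<c_{s-1,r}$; your variational justification of the subadditivity step and your remark that the threshold for $n$ is uniform in $\cG$ spell out details the paper leaves implicit.
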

\begin{proof}[\bf Proof]
By Lemma~\ref{lem:small-edge-spectral-bound}, we obtain
$$
\rho(\cG)=O\left(n^{(r-2)(r-1)/r}\right).
$$
By Lemma~\ref{lem:Fa-asymptotic}, it gives that
$$\rho(\mathcal{F}_a(n))= c_{a,r}n^{(r-1)^2/r}+O\left(n^{(r-2)(r-1)/r}\right),$$
and
$$\rho(\mathcal{F}_{s-1}(n))= c_{s-1,r}n^{(r-1)^2/r}+O\left(n^{(r-2)(r-1)/r}\right).$$
Since $a<s-1$, we have $c_{a,r}<c_{s-1,r}$. Therefore, for all sufficiently large $n$,
$$
\rho(\mathcal{F}_a(n)\cup \cG)\le \rho(\mathcal{F}_a(n))+\rho(\cG)<\rho(\mathcal{F}_{s-1}(n)).
$$
This completes the proof.
\end{proof}

\section{Proof of Theorem~\ref{thm1}}\label{sec:proofs}
In the section, we give the proof of Theorem \ref{thm1}. The sketch of our proof is as follows. First, we reduce the problem to shifted $r$-graphs by applying the shifting operation, which does not increase the matching number and does not decrease the spectral radius.
Then we establish several structural properties of shifted (respectively, shifted-saturated) hypergraphs whose spectral radius is at least $\rho(\mathcal{F}_{s-1}(n))$ (Lemmas~\ref{lem:special-edge}--\ref{lem:shifted-saturated-extremal}). Using these properties, we prove the shifted version of Theorem~\ref{thm1} (Theorem~\ref{thm:shifted-spectral-extremal}). Finally, we remove the shifted assumption and derive Theorem~\ref{thm1} from the shifted version.

We now present several lemmas to establish Lemma \ref{lem:saturation-step}.

\medskip
\begin{lem}\label{lem:special-edge}
Let $\cH$ be a shifted $r$-graph on $[n]$ with $\nu(\cH)<s$. Suppose that $\rho(\cH)\ge \rho(\mathcal{F}_{s-1}(n))$ and $\mathcal{F}_a(n)\subseteq \cH$ for some $0\le a<s-1$. Set $t:=s-a$. Then for all sufficiently large $n$,
\begin{equation*}
e_a:=\{a+1, a+(t-1)r+2, a+(t-1)r+3,\ldots, a+tr\}\in E(\cH).
\end{equation*}
\end{lem}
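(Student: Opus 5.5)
We argue by contradiction: assume $e_a\notin E(\cH)$ and show that then $\cH\subseteq \mathcal{F}_a(n)\cup\cG$ for some $r$-graph $\cG$ with $e(\cG)\le K n^{r-2}$, where $K=K(s,r)$. Lemma~\ref{lem:Fa-comparison} then gives $\rho(\cH)<\rho(\mathcal{F}_{s-1}(n))$, contradicting the hypothesis. The proof therefore reduces to a structural claim: every hyperedge of $\cH$ missing $[a]$ is of a very restricted type, and there are few such hyperedges.

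\textbf{Step 1: edges missing $[a]$ all meet a bounded set.} Let $e=\{b_1<\dots<b_r\}\in E(\cH)$ with $e\cap[a]=\emptyset$. We claim $b_{r-1}\le a+(t-1)r+1$. Suppose instead $b_{r-1}\ge a+(t-1)r+2$. Since $b_1\ge a+1$, we have $e_a\preceq e$ componentwise: $a+1\le b_1$, and for $i\ge 2$ the $i$-th element of $e_a$ is $a+(t-1)r+i\le b_i$, because $b_{r-1}\ge a+(t-1)r+2$ forces $b_i\ge a+(t-1)r+2+(i-(r-1))$ for $i\le r-1$, and $b_r\ge b_{r-1}+1$. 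As $\cH$ is shifted, this gives $e_a\in E(\cH)$, a contradiction. The exact indexing needs care here, and this is where the specific shape of $e_a$ enters. So every edge of $\cH$ avoiding $[a]$ has at least $r-1$ vertices in the bounded set $[a+(t-1)r+1]$.

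\textbf{Step 2: counting and concluding.} The edges found in Step 1 number at most $\binom{a+(t-1)r+1}{r-1}\cdot n=O(n)$. If $r\ge 3$ then $O(n)\le Kn^{r-2}$, so $\cH\subseteq\mathcal{F}_a(n)\cup\cG$ with $e(\cG)=O(n^{r-2})$, and Lemma~\ref{lem:Fa-comparison} finishes the proof. For $r=2$ the bound $O(n)$ is not $O(n^{r-2})=O(1)$, and we need a finer argument.

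\textbf{Step 3 (main obstacle): the case $r=2$, and a more robust alternative.} Here we use the matching condition. The edges avoiding $[a]$ have one endpoint in $[a+1,\,a+2t-1]$, and edges lying inside $[a+1,n]$ can contribute at most $t-1$ disjoint edges beyond those centred on $[a]$. By shiftedness, if vertex $a+j$ with $j\le t-1$ has large degree outside $[a]$, we can greedily build a matching. Combined with $\mathcal{F}_a(n)\subseteq\cH$, the edges outside $\mathcal{F}_a(n)$ then form a star forest centred on at most $t-1$ vertices. If there are $k$ such high-degree centres, $\cH$ is essentially contained in $\mathcal{F}_{a+k}(n)\cup\cG$ with $e(\cG)=O(1)$.

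This suggests the cleaner route I would actually pursue, valid for every $r$. Define $a'$ to be the largest integer such that $\mathcal{F}_{a'}(n)\subseteq\cH$. Then $a'\ge a$, and $a'<s$ because $\mathcal{F}_{s}(n)$ contains $M_s^r$. If $a'=s-1$ then $e_a\in\mathcal{F}_{s-1}(n)\subseteq\cH$ (since $a+1\le s-1$) and we are done. Otherwise $a'<s-1$, and we show that the vertex $a'+1$ misses some edge containing it, so by shiftedness its degree into $[a'+2,n]$ is $O(n^{r-2})$. Shiftedness then propagates this bound to all later vertices, and together with Step 1 applied to $a'$ this yields $e(\cH\setminus\mathcal{F}_{a'}(n))=O(n^{r-2})$. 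Lemma~\ref{lem:Fa-comparison} with $a'$ then gives the contradiction. Making this propagation step precise is the part I expect to require the most work.
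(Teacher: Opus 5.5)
Your overall plan matches the paper's: assume $e_a\notin E(\cH)$, show $\cH\subseteq\mathcal{F}_a(n)\cup\cG$ with $e(\cG)=O(n^{r-2})$, and apply Lemma~\ref{lem:Fa-comparison}. The gap is in Step 1.

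Take $e=\{b_1<\dots<b_r\}$ with $b_1\ge a+1$. The relation $e_a\preceq e$ requires $b_i\ge a+(t-1)r+i$ for $2\le i\le r$. By strict monotonicity, this is equivalent to the single condition $b_2\ge a+(t-1)r+2$.

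Your inference that $b_{r-1}\ge a+(t-1)r+2$ ``forces'' lower bounds on $b_2,\dots,b_{r-2}$ runs monotonicity the wrong way. A large later coordinate says nothing about the earlier ones. Also, the bound you write, $a+(t-1)r+i+3-r$, is below the required $a+(t-1)r+i$ once $r\ge 4$; for $r=3$ the two versions coincide, which is why the argument looks right there.

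For $r\ge4$ your claim is false. Take $\mathcal{F}_a(n)$ together with all $r$-sets $\preceq\{a+1,a+2,n-r+3,\dots,n\}$. This family is shifted. Its edges avoiding $[a]$ all contain $a+1$, so $\nu\le a+1<s$. It omits $e_a$, because $a+2<a+(t-1)r+2$. Yet it contains an edge avoiding $[a]$ with only two vertices in $[a+(t-1)r+1]$. What the contrapositive actually gives is $b_2\le a+(t-1)r+1$: every edge outside $\mathcal{F}_a(n)$ meets $I_a=[a+1,a+(t-1)r+1]$ in at least two vertices, not $r-1$.

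This corrected statement is all you need, uniformly for $r\ge2$. There are at most $\binom{(t-1)r+1}{2}\binom{n}{r-2}=O(n^{r-2})$ such edges, so Lemma~\ref{lem:Fa-comparison} applies directly; this is exactly the paper's proof. Your $r=2$ ``obstacle'' is an artifact of the error: for $r=2$ both endpoints lie in $I_a$, so there are only $O(1)$ such edges. Step 3 is therefore unnecessary.

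The alternative route in Step 3 would not work as stated anyway, because a vertex missing one edge through it need not have small degree. Omitting $\{a'+1,\dots,a'+r-1,n\}$ from a shifted family forces only the omission of the edges that avoid $[a']$ and contain $n$. Then $\mathcal{F}_{a'}(n)$ plus all other edges through $a'+1$ that avoid $[a']$ is shifted with $\nu\le a'+1<s$. In it, $a'+1$ still has degree $\binom{n-a'-2}{r-1}=\Theta(n^{r-1})$ into $[a'+2,n]$.

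Moreover, since $a+1\in e_a$, the case $a'>a$ is trivial. The route therefore collapses back to $a'=a$, and it would need the missing edge through $a+1$ to be $e_a$ itself, which is the original problem.
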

\begin{proof}[\bf Proof]
Suppose to the contrary that $e_a \notin E(\cH)$.
We first deduce that no hyperedge in $E(\cH) \setminus E(\mathcal{F}_a(n))$ contains at least $r-1$ vertices from $[a+(t-1)r+2, n]$. If such a hyperedge $e$ exists, let its $r-1$ vertices in this interval be $y_1 < \dots < y_{r-1}$. Since $e \notin E(\mathcal{F}_a(n))$, the minimum labeled vertex of $e$ is at least $a+1$. Furthermore, $y_j \ge a+(t-1)r+1+j$ for every $j \in [r-1]$. Arranging the vertices of $e$ in increasing order yields $e_a \preceq e$. As $\cH$ is shifted, this implies $e_a \in E(\cH)$, a contradiction.

Since every hyperedge of $E(\cH)\setminus E(\mathcal{F}_a(n))$ contains at most $r-2$ vertices from $[a+(t-1)r+2,n]$, every hyperedge of $E(\cH)\setminus E(\mathcal{F}_a(n))$ intersects $I_a:=[a+1,\ a+(t-1)r+1]$ in at least two vertices. Hence $$\cH\subseteq \mathcal{F}_a(n)\cup \cG_a,$$
where
$$\cG_a=\left\{e\in\binom{[a+1,n]}{r}: |e\cap I_a|\ge 2\right\}.$$
By Lemma \ref{lem:subgraph},
\begin{equation*}
\rho(\cH)\le \rho(\mathcal{F}_a(n)\cup \cG_a).
\end{equation*}

Since $|I_a|=(t-1)r+1=O(1)$, we have $e(\cG_a)=O(n^{r-2}).$
By Lemma~\ref{lem:Fa-comparison} applied to $\cG_a$, we obtain
$$\rho(\cH)\le \rho(\mathcal{F}_a(n)\cup \cG_a)<\rho(\mathcal{F}_{s-1}(n)),$$
which contradicts $\rho(\cH)\ge \rho(\mathcal{F}_{s-1}(n))$. Therefore $e_a\in E(\cH)$.
\end{proof}

\begin{lem}\label{lem:compressed-matching}
Let $\cH$ be a shifted $r$-graph on $[n]$. Suppose $M_q^r\subseteq \cH$ and all hyperedges of $M_q^r$ are disjoint from $[m]$ for a given integer $m$. Then $\cH$ contains a copy of $M_q^r$ inside $[m+1,m+qr]$.
\end{lem}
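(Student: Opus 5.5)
The approach is to use the ordering property of shifted hypergraphs: if $B\in E(\cH)$ and $A\preceq B$, then $A\in E(\cH)$. The aim is to pack the given matching into the interval $[m+1,m+qr]$ by replacing each vertex with a smaller one, so that every new hyperedge is $\preceq$-dominated by an original hyperedge.

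Let the matching be $e_1,\dots,e_q$. Its union $U=e_1\cup\cdots\cup e_q$ has $qr$ vertices, all in $[m+1,n]$. List them in increasing order as $u_1<u_2<\cdots<u_{qr}$. Since the $u_k$ are distinct integers at least $m+1$, induction gives $u_k\ge m+k$ for every $k\in[qr]$. Define the order-preserving bijection $\phi:U\to[m+1,m+qr]$ by $\phi(u_k)=m+k$. Then $\phi(u)\le u$ for all $u\in U$.

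Set $f_i=\phi(e_i)$. The sets $f_1,\dots,f_q$ are pairwise disjoint $r$-subsets of $[m+1,m+qr]$, because $\phi$ is injective. It remains to check $f_i\preceq e_i$. Write $e_i=\{b_1<\cdots<b_r\}$. Since $\phi$ is strictly increasing, $f_i=\{\phi(b_1)<\cdots<\phi(b_r)\}$, and its $j$-th smallest element $\phi(b_j)$ is at most $b_j$. Hence $f_i\preceq e_i$. Because $\cH$ is shifted and $e_i\in E(\cH)$, we get $f_i\in E(\cH)$. Therefore $\{f_1,\dots,f_q\}$ is a copy of $M_q^r$ in $\cH$ inside $[m+1,m+qr]$.

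The only step that needs care is the domination property: that $A\preceq B$ together with $B\in E(\cH)$ forces $A\in E(\cH)$. The paper states this for shifted graphs, so we may invoke it directly. If one wanted to justify it, one would replace the coordinates of $B$ by those of $A$ one at a time, in increasing order of index. Each replacement is a single shift $S_{ij}$ with $i<j$, under which $\cH$ is invariant, and each intermediate set remains a genuine $r$-set. Once this property is in hand, the rest is routine.
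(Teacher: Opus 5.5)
Your proposal is correct and is essentially the paper's proof: you relabel the matching's vertex set order-preservingly via $u_k\mapsto m+k$, use $u_k\ge m+k$, and then apply $f_i\preceq e_i$ together with shiftedness. Your extra sketch of why $A\preceq B$ and $B\in E(\cH)$ force $A\in E(\cH)$ (one shift $S_{ij}$ per coordinate, taken in increasing order) is also sound, and it fills in a fact the paper only asserts.
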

\begin{proof}[\bf Proof]

Let $M$ be a copy of $M_q^r$ in $\cH$ with edge set $E(M)=\{e_1,\dots,e_q\}$. Enumerate the vertices in $\bigcup_{j=1}^q e_j$ in increasing order as
$$
u_1<u_2<\cdots<u_{qr}.
$$
Clearly, for every $i\in[qr]$, we have $u_i\ge m+i$ as $M$ is disjoint from $[m]$.

Since $e_j \subset\left\{u_1, \ldots, u_{q r}\right\}$ for each $j \in[q]$, its vertices are uniquely determined by their indices in $[q r]$. Let $I_j=\left\{i \in[q r]: u_i \in e_j\right\}$. Writing $I_j=\left\{i_{j, 1}, \ldots, i_{j, r}\right\}$ with $i_{j, 1}<\cdots<i_{j, r}$, we define
$$f_j=\left\{m+k: k \in I_j\right\} .$$
Since $I_1, \ldots, I_q$ form a partition of $[q r]$, it follows that $f_1, \ldots, f_q$ are pairwise disjoint $r$-subsets of $[m+1, m+q r]$.

It remains to show that $f_j\in E(\cH)$ for every $j\in[q]$. Writing the elements of $f_j$ and $e_j$ in increasing order gives
$$
f_j=\{m+i_{j,1}, \dots, m+i_{j,r}\}
$$
and
$$
e_j=\{u_{i_{j,1}}, \dots, u_{i_{j,r}}\}.
$$
By the fact that $u_i\ge m+i$ for all $i\in[qr]$, we have $m+i_{j,\ell}\le u_{i_{j,\ell}}$
for every $\ell\in[r]$. Hence $f_j\preceq e_j$. As $\cH$ is shifted and $e_j\in E(\cH)$, we obtain $f_j\in E(\cH)$.
Therefore, $f_1,\dots,f_q$ form a copy of $M_q^r$ in $\cH$ whose vertices are contained in $[m+1,m+qr]$.
\end{proof}

A shifted $r$-graph $\cH\subseteq\binom{[n]}{r}$ with $\nu(\cH)<s$ is called \textit{shifted-saturated} if there is no shifted $r$-graph $\cH'$ such that
$$ \cH\subsetneq \cH'\subseteq\binom{[n]}{r}\quad\text{and}\quad \nu(\cH')<s. $$
Namely, a shifted-saturated $r$-graph $\cH$ is an edge-maximal shifted graph (i.e., no proper shifted supergraph of $H$ has matching number less than $s$.).
Clearly, every shifted $r$-graph $\cH$ with $\nu(\cH)<s$ is contained in some shifted-saturated $r$-graph.

\begin{lem}\label{lem:saturation-step}
Let $\cH$ be a shifted-saturated $r$-graph on $[n]$ with $\nu(\cH)<s$. Suppose $\rho(\cH)\ge \rho(\mathcal{F}_{s-1}(n))$ and $\mathcal{F}_a(n)\subseteq \cH$ for some $0\le a<s-1$. Then for all sufficiently large $n$, we have $\mathcal{F}_{a+1}(n)\subseteq \cH.$
\end{lem}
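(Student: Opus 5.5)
The plan is to show that $\cH':=\cH\cup\mathcal{F}_{a+1}(n)$ is a shifted $r$-graph with $\nu(\cH')<s$. Shifted-saturation of $\cH$ then forces $\cH'=\cH$, that is, $\mathcal{F}_{a+1}(n)\subseteq\cH$.

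\emph{Step 1 (shiftedness).} First I check that $\mathcal{F}_{a+1}(n)$ is shifted. If $B\cap[a+1]\neq\emptyset$ and $A\preceq B$, then the minimum of $A$ is at most the minimum of $B$, which is at most $a+1$, so $A\in\mathcal{F}_{a+1}(n)$. Since $\cH$ is also shifted, the union $\cH'$ is downward closed under $\preceq$. Here I use the equivalence between being shifted and being closed under $\preceq$; the backward direction holds because $S_{ij}(e)\preceq e$. So $\cH'$ is shifted.

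\emph{Step 2 (matching number).} Suppose for contradiction that $\cH'$ contains a matching $M$ with $s$ edges. The edges of $M$ that meet $[a+1]$ are pairwise disjoint, so there are at most $a+1$ of them. Hence at least $s-a-1=t-1$ edges of $M$ avoid $[a+1]$, where $t=s-a$. Such edges do not lie in $\mathcal{F}_{a+1}(n)$, so they belong to $\cH$. Applying Lemma~\ref{lem:compressed-matching} with $m=a+1$ and $q=t-1$, I obtain $t-1$ pairwise disjoint edges of $\cH$ inside $[a+2,\,a+(t-1)r+1]$.

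Next I use Lemma~\ref{lem:special-edge}. Its hypotheses are exactly those of the present statement, since a shifted-saturated $r$-graph is shifted. It gives $e_a=\{a+1,\,a+(t-1)r+2,\dots,a+tr\}\in E(\cH)$, and $e_a$ is disjoint from the $t-1$ compressed edges. Finally, for each $i\in[a]$ I choose pairwise disjoint $(r-1)$-sets $T_i\subseteq[a+tr+1,n]$, which is possible for $n$ large. Each $\{i\}\cup T_i$ lies in $\mathcal{F}_a(n)\subseteq\cH$. Together these give $a+(t-1)+1=s$ pairwise disjoint edges of $\cH$, contradicting $\nu(\cH)<s$. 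Therefore $\nu(\cH')<s$.

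\emph{Conclusion and main obstacle.} Since $\cH\subseteq\cH'\subseteq\binom{[n]}{r}$, $\cH'$ is shifted and $\nu(\cH')<s$, saturation gives $\cH'=\cH$, which is the claim. The only genuinely spectral input is Lemma~\ref{lem:special-edge}, which supplies the special edge $e_a$ through the hypothesis $\rho(\cH)\ge\rho(\mathcal{F}_{s-1}(n))$ and large $n$. The step I expect to need the most care is the counting in Step 2. One must make sure that the edges avoiding $[a+1]$ really lie in $\cH$ rather than only in $\cH'$, and that after compression they sit strictly below the large vertices of $e_a$, so that $e_a$, the compressed edges, and the $a$ edges through $[a]$ are mutually disjoint.
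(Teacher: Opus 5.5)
Your proof is correct and follows essentially the same route as the paper. Both arguments obtain $e_a$ from Lemma~\ref{lem:special-edge}, then take an $s$-matching in $\cH\cup\mathcal{F}_{a+1}(n)$ and compress its $t-1$ edges avoiding $[a+1]$ via Lemma~\ref{lem:compressed-matching}, then add $e_a$ and $a$ edges through $[a]$, contradicting $\nu(\cH)<s$ together with saturation. Your counting (at most $a+1$ matching edges meet $[a+1]$, and the others automatically lie in $\cH$) is a slightly more direct version of the paper's argument that exactly one matching edge is new, and your explicit check that the union is shifted fills in a step the paper only asserts.
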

\begin{proof}[\bf Proof]
For convenience, set $t:=s-a$. By Lemma~\ref{lem:special-edge},
\begin{equation*}
e_a:=\{a+1, a+(t-1)r+2, \ldots, a+tr\}\in E(\cH).
\end{equation*}
Suppose to the contrary that $\mathcal{F}_{a+1}(n)\nsubseteq \cH$. Since both $\cH$ and $\mathcal{F}_{a+1}(n)$ are shifted, the $r$-graph $\cH'=\cH\cup\mathcal{F}_{a+1}(n)$ is shifted and $\cH\subsetneq \cH'$. Since $\cH$ is shifted-saturated, we have $\nu(\cH')\ge s$. Thus $\cH'$ contains a copy $M$ of $M_s^r$.  Next, we will show $\nu(\cH)\ge s$, which leads to a contradiction.

Since $\nu(\cH)<s$, at least one hyperedge of $M$ lies in $E(\cH')\setminus E(\cH)$. Since $\mathcal{F}_a(n)\subseteq \cH$, every hyperedge in $E(\cH')\setminus E(\cH)$ belongs to $E(\mathcal{F}_{a+1}(n))\setminus E(\mathcal{F}_a(n))$, and consequently is disjoint from $[a]$ and contains $a+1$. Therefore, a matching can contain at most one such new hyperedge. It follows that $M$ contains exactly one hyperedge from $E(\cH')\setminus E(\cH)$, denote by $f$. The remaining $s-1$ hyperedges of $M$ belong to $\cH$ and are disjoint from $f$.
Among these $s-1$ hyperedges, at most $a$ intersect $[a]$. Hence at least $s-1-a=t-1$ of them are disjoint from $[a]$. Since they are also disjoint from $f$, and $a+1\in f$, these $t-1$ hyperedges are disjoint from $[a+1]$. By Lemma~\ref{lem:compressed-matching}, $\cH$ contains a copy of $M_{t-1}^r$, denoted by $M^*$,  inside $[a+2,\ a+(t-1)r+1]$. This matching is disjoint from $e_a$, since $e_a$ consists of $a+1$ together with $r-1$ vertices from $[a+(t-1)r+2,\ a+tr]$.

To complete the $s$-matching, it remains to find $a$ additional disjoint hyperedges by exploiting the assumption $\mathcal{F}_a(n)\subseteq \cH$. For sufficiently large $n$ and each $i\in[a]$, we select pairwise disjoint $(r-1)$-subsets $Y_i\subseteq [a+tr+1,n]$ and define $f_i=\{i\}\cup Y_i$. Since $f_i$ intersects $[a]$ at vertex $i$, it immediately follows that $f_i\in\mathcal{F}_a(n)\subseteq \cH$. By construction, the hyperedges $f_1,\dots,f_a$ are pairwise disjoint and are also vertex-disjoint from $M^*\cup\{e_a\}$. Consequently, $M^*\cup\{e_a,f_1,\dots,f_a\}$ forms a matching of size $(t-1)+1+a=s$ in $\cH$, which contradicts the condition $\nu(\cH)<s$. Hence, $\mathcal{F}_{a+1}(n)\subseteq \cH$, completing the proof.
\end{proof}

\medskip
\begin{lem}\label{lem:shifted-saturated-extremal}
Let $\cH^{sat}$ be a shifted-saturated $r$-graph on $[n]$ with $\nu(\cH^{sat})<s$. If $\rho(\cH^{sat})\ge \rho(\mathcal{F}_{s-1}(n))$, then $\cH^{sat}=\mathcal{F}_{s-1}(n)$.
\end{lem}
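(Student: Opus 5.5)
We argue by induction on $a$, applying Lemma~\ref{lem:saturation-step} repeatedly, and then use saturation together with the shape of $\mathcal{F}_{s-1}(n)$ to exclude any extra hyperedges.

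\textbf{Base case.} Since $\mathcal{F}_0(n)=\emptyset\subseteq\cH^{sat}$, the hypothesis $\mathcal{F}_a(n)\subseteq\cH^{sat}$ holds for $a=0$.

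\textbf{Induction.} Suppose $\mathcal{F}_a(n)\subseteq\cH^{sat}$ for some $0\le a<s-1$. All other hypotheses of Lemma~\ref{lem:saturation-step} hold: $\cH^{sat}$ is shifted-saturated, $\nu(\cH^{sat})<s$, and $\rho(\cH^{sat})\ge\rho(\mathcal{F}_{s-1}(n))$. The lemma therefore gives $\mathcal{F}_{a+1}(n)\subseteq\cH^{sat}$ for $n$ sufficiently large. There are only $s-1$ steps, so we may take $n$ larger than the maximum of the finitely many thresholds involved (each depends only on $s$ and $r$). After the last step we obtain $\mathcal{F}_{s-1}(n)\subseteq\cH^{sat}$.

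\textbf{Excluding extra hyperedges.} We show the inclusion is an equality. Suppose some $e\in E(\cH^{sat})$ has $e\cap[s-1]=\emptyset$. For $n\ge (s-1)r+r$, choose pairwise disjoint $(r-1)$-sets $Y_1,\dots,Y_{s-1}\subseteq[s,n]\setminus e$, and set $f_i=\{i\}\cup Y_i$ for $i\in[s-1]$. Each $f_i$ meets $[s-1]$, so $f_i\in\mathcal{F}_{s-1}(n)\subseteq\cH^{sat}$. The sets $f_1,\dots,f_{s-1}$ are pairwise disjoint, and each is disjoint from $e$, because $e$ avoids $[s-1]$ and the $Y_i$ were chosen outside $e$. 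Hence $\{e,f_1,\dots,f_{s-1}\}$ is a matching of size $s$ in $\cH^{sat}$, contradicting $\nu(\cH^{sat})<s$.

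Consequently every hyperedge of $\cH^{sat}$ meets $[s-1]$, so $\cH^{sat}\subseteq\mathcal{F}_{s-1}(n)$, and therefore $\cH^{sat}=\mathcal{F}_{s-1}(n)$.

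All the analytic difficulty lies in Lemma~\ref{lem:saturation-step}, which is already established; this argument only iterates it and adds an elementary matching construction. The one point needing care is that "sufficiently large $n$" is uniform across the $s-1$ iterations. This holds because each threshold depends only on $s$, $r$ and $a\le s-2$.
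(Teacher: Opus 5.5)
Your proof is correct and matches the paper's argument: you iterate Lemma~\ref{lem:saturation-step} from $a=0$ to get $\mathcal{F}_{s-1}(n)\subseteq\cH^{sat}$, then exclude any hyperedge avoiding $[s-1]$ by extending it to an $s$-matching with edges $\{i\}\cup Y_i$. You also state two points the paper leaves implicit: the finitely many largeness thresholds can be taken uniformly, and $n\ge sr$ suffices for the final matching construction.
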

\begin{proof}[\bf Proof]
By repeatedly applying Lemma~\ref{lem:saturation-step}, we obtain $\mathcal{F}_{s-1}(n)\subseteq \cH^{sat}$. To establish the reverse inclusion $\cH^{sat} \subseteq \mathcal{F}_{s-1}(n)$, we show that $\cH^{sat}$ has no hyperedge contained entirely in $[s,n]$. Suppose for contradiction that there exists $e\in E(\cH^{sat})$ with $e\subseteq [s,n]$. For sufficiently large $n$, we can choose pairwise disjoint $(r-1)$-subsets $Y_i\subseteq [s,n]\setminus e$ for each $i\in[s-1]$, and define $e_i=\{i\}\cup Y_i$. Since $e_i$ intersects $[s-1]$ at $i$, we have $e_i\in \mathcal{F}_{s-1}(n)\subseteq \cH^{sat}$. By construction, the hyperedges $e_1,\dots,e_{s-1}$ are pairwise disjoint and also disjoint from $e$. Hence, $\{e,e_1,\dots,e_{s-1}\}$ forms a matching of size $s$ in $\cH^{sat}$, contradicting $\nu(\cH^{sat})<s$. Thus, every hyperedge of $\cH^{sat}$ must intersect $[s-1]$, which implies $\cH^{sat} \subseteq \mathcal{F}_{s-1}(n)$. Combined with $\mathcal{F}_{s-1}(n)\subseteq \cH^{sat}$, we conclude that $\cH^{sat}=\mathcal{F}_{s-1}(n)$.
\end{proof}
\medskip
\begin{thm}\label{thm:shifted-spectral-extremal}
Let $\cH$ be a shifted $r$-graph on $[n]$ with $\nu(\cH)<s$. For sufficiently large $n$, we have
\begin{equation*}
\rho(\cH)\le \rho(\mathcal{F}_{s-1}(n)).
\end{equation*}
Moreover, equality holds if and only if $\cH=\mathcal{F}_{s-1}(n)$.
\end{thm}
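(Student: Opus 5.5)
The plan is to reduce to the shifted-saturated case and apply Lemma~\ref{lem:shifted-saturated-extremal}. Let $\cH$ be shifted with $\nu(\cH)<s$, and take $n$ large enough for all the earlier lemmas to apply. As noted before Lemma~\ref{lem:saturation-step}, $\cH$ lies in some shifted-saturated $r$-graph $\cH^{sat}$ with $\cH\subseteq\cH^{sat}\subseteq\binom{[n]}{r}$ and $\nu(\cH^{sat})<s$. Such an $\cH^{sat}$ exists because the family of shifted supergraphs of $\cH$ with matching number below $s$ is finite and contains $\cH$, so it has an inclusion-maximal member. By Lemma~\ref{lem:subgraph}, $\rho(\cH)\le\rho(\cH^{sat})$.

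For the inequality, argue by contradiction. Suppose $\rho(\cH)>\rho(\mathcal{F}_{s-1}(n))$. Then $\rho(\cH^{sat})\ge\rho(\cH)>\rho(\mathcal{F}_{s-1}(n))$. In particular $\rho(\cH^{sat})\ge\rho(\mathcal{F}_{s-1}(n))$, so Lemma~\ref{lem:shifted-saturated-extremal} gives $\cH^{sat}=\mathcal{F}_{s-1}(n)$. Hence $\rho(\cH^{sat})=\rho(\mathcal{F}_{s-1}(n))$, a contradiction. So $\rho(\cH)\le\rho(\mathcal{F}_{s-1}(n))$.

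For the equality case, the direction ``if'' is immediate. The family $\mathcal{F}_{s-1}(n)$ is shifted. It also has $\nu<s$, since every edge meets $[s-1]$, so any matching has at most $s-1$ edges.

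For ``only if'', suppose $\rho(\cH)=\rho(\mathcal{F}_{s-1}(n))$. Then $\rho(\cH^{sat})\ge\rho(\mathcal{F}_{s-1}(n))$, and Lemma~\ref{lem:shifted-saturated-extremal} gives $\cH\subseteq\cH^{sat}=\mathcal{F}_{s-1}(n)$. Suppose the inclusion were proper. The $r$-graph $\mathcal{F}_{s-1}(n)$ is connected for $n\ge r+1$: any two edges are linked through edges containing vertex $1$, and every vertex lies in some edge through $1$. The strict part of Lemma~\ref{lem:subgraph} then gives $\rho(\cH)<\rho(\mathcal{F}_{s-1}(n))$, a contradiction. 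Therefore $\cH=\mathcal{F}_{s-1}(n)$.

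The main point needing care is the threshold for ``sufficiently large $n$''. Lemma~\ref{lem:shifted-saturated-extremal} is applied through finitely many invocations of Lemma~\ref{lem:saturation-step}, one for each $a=0,\dots,s-2$. Each invocation has a threshold depending only on $r$, $s$ and $a$, so taking the maximum of these thresholds gives one uniform threshold $n_0(r,s)$. This threshold does not depend on $\cH$, so the argument above is valid for all $n\ge n_0$.
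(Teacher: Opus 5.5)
Your proposal is correct and follows the paper's proof essentially step for step. You pass to a shifted-saturated supergraph $\cH^{sat}$, invoke Lemma~\ref{lem:shifted-saturated-extremal} to force $\cH^{sat}=\mathcal{F}_{s-1}(n)$ whenever $\rho(\cH^{sat})\ge\rho(\mathcal{F}_{s-1}(n))$, and then use the strict part of Lemma~\ref{lem:subgraph} on the connected $\mathcal{F}_{s-1}(n)$ for uniqueness. Your extra remarks (existence of $\cH^{sat}$ by finiteness, and a uniform threshold $n_0(r,s)$) fill in points the paper leaves implicit. One small caveat: your connectivity argument through vertex $1$ needs $s\ge 2$, but the case $s=1$ is trivial since then $\cH=\emptyset=\mathcal{F}_0(n)$.
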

\begin{proof}[\bf Proof]
We first show $\rho(\cH)\le \rho(\mathcal{F}_{s-1}(n))$. Clearly, $\cH \subseteq \cH^{sat}$ for some shifted-saturated $r$-graph $\cH^{sat}$ with $\nu(\cH^{sat})<s$. By Lemma~\ref{lem:subgraph}, we obtain $\rho(\cH)\le \rho(\cH^{sat})$. If $\rho(\cH)>\rho(\mathcal{F}_{s-1}(n))$, then $\rho(\cH^{sat})>\rho(\mathcal{F}_{s-1}(n))$. By Lemma~\ref{lem:shifted-saturated-extremal}, this gives $\cH^{sat}=\mathcal{F}_{s-1}(n)$ and so
$$
\rho(\cH)>\rho(\mathcal{F}_{s-1}(n))=\rho(\cH^{sat}),
$$
a contradiction. Hence, $\rho(\cH)\le \rho(\mathcal{F}_{s-1}(n))$.

Now, we characterize the unique extremal hypergraph $\cH$ satisfying $\rho(\cH)=\rho(\mathcal{F}_{s-1}(n))$. For the saturated extension $\cH^{sat}$ of $\cH$, we have
$$
\rho(\cH^{sat})\ge \rho(\cH)=\rho(\mathcal{F}_{s-1}(n)).
$$
By Lemma~\ref{lem:shifted-saturated-extremal}, this implies
$$
\rho(\cH^{sat})= \rho(\cH)=\rho(\mathcal{F}_{s-1}(n)).
$$
Since the hypergraph $\mathcal{F}_{s-1}(n)$ is connected for $s\ge 2$, $\rho(\cH^{sat})\ge \rho(\cH)$ with equality if and only if $\cH=\cH^{sat}$. Consequently, $\cH=\mathcal{F}_{s-1}(n)$, completing the proof.
\end{proof}
\medskip
Next, we complete the proof of Theorem~\ref{thm1} using the following lemma.

\begin{lem}[\cite{WaP}]\label{thm1.2}
Let $n \ge 2r+s-2$. If an $r$-graph $\cH \subseteq \binom{[n]}{r}$ is not isomorphic to $\cF_{s-1}(n)$ but satisfies $S_{ij}(\cH) = \cF_{s-1}(n)$, then $\nu(\cH) \ge s$.
\end{lem}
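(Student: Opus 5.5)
We must prove Lemma \ref{thm1.2}: if $\cH\not\cong\cF_{s-1}(n)$ but $S_{ij}(\cH)=\cF_{s-1}(n)$ for some $i<j$, then $\nu(\cH)\ge s$. The plan is to understand exactly how $\cH$ differs from $\cF_{s-1}(n)$, and then build an $s$-matching by hand.

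First I would record basic facts about the shift. $S_{ij}$ preserves the number of edges, so $e(\cH)=e(\cF_{s-1}(n))$. Moreover, every edge of $\cH$ not containing $j$, or containing both $i$ and $j$, is unchanged by the shift. Write $\cF=\cF_{s-1}(n)$.

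If $j\le s-1$ or $i\ge s$, then $\cF$ is invariant under the transposition $(i\,j)$. I claim this forces $\cH=\cF$, contradicting the hypothesis. Indeed, suppose an edge $e\in\cH$ with $j\in e$, $i\notin e$ were moved to $e'=(e\setminus\{j\})\cup\{i\}\notin\cH$. Then $e\in\cF$ would have to be absent from $S_{ij}(\cH)$, since $e$ itself is not in the image ($e'$ replaced it, and nothing else maps to $e$). This is impossible because $S_{ij}(\cH)=\cF$. So no edge moves and $\cH=\cF$. Hence we may assume $i\le s-1<s\le j$.

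Next I identify the differing edges. Let $\mathcal{A}$ be the set of edges of $\cH$ that actually moved; each $e\in\mathcal{A}$ contains $j$, misses $i$, and has $(e\setminus\{j\})\cup\{i\}\notin\cH$. Then
\[
\cH=(\cF\setminus\mathcal{A}')\cup\mathcal{A},
\]
where $\mathcal{A}'$ is the set of images of edges in $\mathcal{A}$. Since $\cH\ne\cF$, we have $\mathcal{A}\ne\emptyset$, and every $e\in\mathcal{A}$ lies outside $\cF$. Such an $e$ avoids $[s-1]$ and contains $j\ge s$. So $\cH$ contains an edge $e_0\subseteq[s,n]$ with $j\in e_0$.

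The goal is then to find $s-1$ pairwise disjoint edges of $\cH$, disjoint from $e_0$, each meeting $[s-1]$ in a distinct vertex. For $k\in[s-1]\setminus\{i\}$, the edges $\{k\}\cup Y$ with $Y\subseteq[s,n]$ and $j\notin Y$ are unchanged by the shift, so they lie in both $\cF$ and $\cH$; these are easy to choose disjointly using $n\ge 2r+s-2$. The hard vertex is $i$. We need an edge $\{i\}\cup Y\in\cH$ with $Y\subseteq[s,n]\setminus e_0$ disjoint from the other chosen sets. Any edge $\{i\}\cup Y$ with $j\notin Y$ lies in $\cH$ unless it is the image $(g\setminus\{j\})\cup\{i\}$ of some $g=\{j\}\cup Y\in\mathcal{A}$.

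This is the main obstacle: $\cH$ could lack all such $i$-edges disjoint from $e_0$, and the vertex budget $n\ge 2r+s-2$ is tight, so counting alone will not suffice. I would split into two cases.

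In the first case, some edge $\{i\}\cup Y\in\cH$ has $Y\subseteq[s,n]$, $|Y|=r-1$, and $Y\cap e_0=\emptyset$. Using $|[s,n]|\ge 2r-1$ and $|e_0\cup Y|=2r-1$, I would check that the vertices of $[s,n]$ outside $e_0\cup Y$, together with the freedom to choose $Y$ and $e_0$ within $\mathcal{A}$, leave room for the other $s-2$ edges. Note that those edges may also use vertices of $[s-1]$ other than their own centre; this gives extra room, and I would exploit it to complete the $s$-matching.

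In the second case, for every $e\in\mathcal{A}$, every $(r-1)$-set $Y\subseteq[s,n]\setminus e$ with $\{i\}\cup Y\notin\cH$ has $\{j\}\cup Y\in\mathcal{A}$. In particular, if an $i$-edge disjoint from $e_0$ is missing, then $\mathcal{A}$ contains $\{j\}\cup Y$, and this edge is disjoint from $e_0\setminus\{j\}$. Taking $e_0$ and $\{j\}\cup Y$ and swapping the roles of $j$ and another vertex of $e_0$, I would try to get two disjoint non-$\cF$ edges, or an edge $e_0$ avoiding $j$. Then $i$ can be matched with an edge of the form $\{i,j\}\cup Z$, which is unchanged by the shift and therefore in $\cH$.

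I expect the bookkeeping in this last case, under the tight bound on $n$, to be the hardest part. Since the lemma is cited from \cite{WaP}, I would ultimately follow that source for the sharp-$n$ details.
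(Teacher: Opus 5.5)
Your reduction to $i\le s-1<s\le j$ and the decomposition $\cH=(\cF\setminus\mathcal{A}')\cup\mathcal{A}$ are correct. The paper itself only cites this lemma, so your argument has to stand on its own, and it stops exactly where the proof has to happen. Several of the moves you sketch are also impossible.

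Put $W=[s,n]\setminus\{j\}$ and $\mathcal{B}=\{e\setminus\{j\}:e\in\mathcal{A}\}\subseteq\binom{W}{r-1}$. Every edge of $\cH\setminus\cF=\mathcal{A}$ contains $j$ and misses $[s-1]$. Hence an $s$-matching of $\cH$ consists of exactly one edge $\{j\}\cup Y$ with $Y\in\mathcal{B}$, plus $s-1$ edges each meeting $[s-1]$ in exactly one vertex. The edge at $i$ must be $\{i\}\cup Z$ with $Z\in\binom{W}{r-1}\setminus\mathcal{B}$ and $Z\cap Y=\emptyset$. This rules out two disjoint non-$\cF$ edges, a non-$\cF$ edge avoiding $j$, an $i$-edge of the form $\{i,j\}\cup Z$, and any ``extra room'' from other vertices of $[s-1]$. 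Your Case~2 hypothesis holds automatically, so your case split is not a real dichotomy.

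More fundamentally, you only ever use $\cH\neq\cF$, and that cannot suffice. If $\mathcal{B}=\binom{W}{r-1}$, then $\cH$ is the image of $\cF$ under the transposition $(i\,j)$, so $\nu(\cH)<s$. The missing idea is to read $\cH\not\cong\cF$ as $\emptyset\neq\mathcal{B}\subsetneq\binom{W}{r-1}$, and then produce disjoint $Y\in\mathcal{B}$, $Z\notin\mathcal{B}$:
\begin{enumerate}
\item Join a member of $\mathcal{B}$ to a non-member by single-element exchanges, and take consecutive sets $Y_k\in\mathcal{B}$, $Y_{k+1}\notin\mathcal{B}$.
\item Pick an $(r-1)$-set $T\subseteq W\setminus(Y_k\cup Y_{k+1})$; this needs $|W|\ge 2r-1$.
\item Use the pair $(T,Y_{k+1})$ if $T\in\mathcal{B}$, and $(Y_k,T)$ otherwise.
\end{enumerate}
The edges $\{k\}\cup Z_k$ for $k\in[s-1]\setminus\{i\}$ are then chosen disjointly in the rest of $[s,n]$.

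Second, your claim that this filling is easy under $n\ge 2r+s-2$ is false. An $s$-matching needs $rs$ vertices, and $rs>2r+s-2$ as soon as $s\ge 3$. In fact the lemma as printed fails in this range, so there are no sharp-$n$ details to borrow from the source. For $r=2$, $s=3$, $n=5$, write $ab$ for $\{a,b\}$ and take $\cH=\{12,13,15,23,24,25,34\}$. Then $S_{13}(\cH)=\cF_2(5)$, and $\cH$ is not isomorphic to $\cF_2(5)$ (degree sequences $(4,3,3,2,2)$ versus $(4,4,2,2,2)$), yet $\nu(\cH)\le 2$. For $s=2$, $n=2r$, $r\ge 3$, the choice $\mathcal{B}=\{Y,W\setminus Y\}$ gives a non-star intersecting family with $\nu=1$; this is the usual failure of EKR uniqueness at $n=2r$. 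The argument above works for $n\ge\max\{rs,\,2r+s-1\}$. That is all Theorem~\ref{thm1} needs, since there $n$ is sufficiently large.
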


\begin{proof}[\bf Proof of Theorem~\ref{thm1}]
Let $\cG$ be any extremal $r$-graph of Theorem \ref{thm1}. By repeatedly applying the shifting operation, we obtain a shifted $r$-graph $\cH$. Lemmas~\ref{lem2.1} and \ref{newlem} ensure that the shifting operation preserves the matching condition and does not decrease the spectral radius. Thus, $\cH$ is also an extremal hypergraph.

Theorem~\ref{thm:shifted-spectral-extremal} establishes that, for sufficiently large $n$, $\cF_{s-1}(n)$ uniquely maximizes the spectral radius among all shifted $r$-graphs with $\nu(\cH)<s$. This implies $\cH = \cF_{s-1}(n)$. Lemma~\ref{thm1.2} then guarantees that
$$\cG\cong\cH=\cF_{s-1}(n).$$
This completes the proof of Theorem~\ref{thm1}.
\end{proof}

\section*{Acknowledgments}
Research of  Kang and Lu was supported by the National Natural Science Foundation of China (grant numbers 12331012, 12571375).
The research of Yuan and Zhou was supported by the National Natural Science Foundation of China (grant numbers~12271337, 12371347).

\section*{Declaration of interest}
The authors declare no known conflicts of interest.




\begin{thebibliography}{99}
\small{



\bibitem{68} B. Bollob\'{a}s, D. Daykin, P. Erd\H{o}s, Sets of independent edges of a hypergraph, \textit{Quarterly Journal of Mathematics: Oxford Series (2)}, \textbf{27}(105) (1976) 25--32.

\bibitem{Co-Du} J. Cooper, A. Dutle, Spectra of uniform hypergraphs, \textit{Linear Algebra Appl.} \textbf{436}(9) (2012) 3268--3292.

\bibitem{Er1} P. Erd\H{o}s, A problem on independent $r$-tuples, \textit{Ann. Univ. Sci. Budapest}, \textbf{8} (1965) 93--95.

\bibitem{EGa} P. Erd\H{o}s, T. Gallai, On maximal paths and circuits of graphs, \textit{Acta Math. Acad. Sci. Hung.} \textbf{10} (1959) 337--356.

\bibitem{EKR} P. Erd\H{o}s, C. Ko, R. Rado, Intersection theorems for systems of finite sets, \textit{Quart. J. Math. Oxford Ser.} \textbf{12}(2) (1961) 313--320.

\bibitem{Fr111} P. Frankl, The shifting technique in extremal set theory, \textit{Surv. Combin.} \textbf{123} (1987) 81--110.

\bibitem{Fr1} P. Frankl, Improved bounds for Erd\H{o}s' matching conjecture, \textit{J. Combin. Theory Ser. A} \textbf{120} (2013) 1068--1072.

\bibitem{Fran2017} P. Frankl, On the maximum number of edges in a hypergraph with a given matching number, \textit{Discrete Appl. Math.} \textbf{216} (2017) 562--581.

\bibitem{213} P. Frankl, Proof of the Erd\H{o}s Matching Conjecture in a new range, \textit{Israel J. Math.} \textbf{222}(1) (2017) 421--430.

\bibitem{FrKu} P. Frankl, A. Kupavskii, The Erd\H{o}s matching conjecture and concentration inequalities, \textit{J. Combin. Theory Ser. B} \textbf{157} (2022) 366--400.

\bibitem{215} P. Frankl, T. \L uczak, K. Mieczkowska, On matchings in hypergraphs, \textit{Electron. J. Combin.} \textbf{19}(2) (2012) P42.

\bibitem{Fr-Ga-Ha} S. Friedland, S. Gaubert, L. Han, Perron-Frobenius theorem for nonnegative multilinear forms and extensions, \textit{Linear Algebra Appl.} \textbf{438} (2013) 738--749.



\bibitem{Hou-Hu-Liu} J. Hou, C. Hu, X. Liu, A finite-board reduction for the Erd\H{o}s Matching Conjecture and the 4-uniform case via exact certificates, \textit{arXiv preprint}, arXiv:2605.26060, 2026.

\bibitem{312} H. Huang, P. Loh, B. Sudakov, The size of a hypergraph and its matching number, \textit{Combin. Probab. Comput.} \textbf{21}(03) (2012) 442--450.


\bibitem{KoKu} D. Kolupaev, A. Kupavskii, Erd\H{o}s' Matching Conjecture for almost perfect matchings, \textit{Discrete Math.} \textbf{346} (2023) 113304.

\bibitem{KuSo} A. Kupavskii, G. Sokolov, A complete solution of the Erd\H{o}s-Kleitman matching problem for $n\leq 3s$, \textit{arXiv preprint}, arXiv:2511.21628, 2025.

\bibitem{Lim} L. Lim, Singular values and eigenvalues of tensors: a variational approach, in: \textit{Proceedings of the IEEE International Workshop on Computational Advances in Multi-Sensor Adaptive Processing (CAMSAP'05)}, \textbf{1} (2005) 129--132.


\bibitem{LuMi} T. \L uczak, K. Mieczkowska, On Erd\H{o}s' extremal problem on matchings in hypergraphs, \textit{J. Combin. Theory Ser. A} \textbf{124} (2014) 178--194.


\bibitem{Ni} V. Nikiforov, Analytic methods for uniform hypergraphs, \textit{Linear Algebra Appl.} \textbf{457} (2014) 455--535.

\bibitem{Qi-new} L. Qi, Eigenvalues of a real supersymmetric tensor, \textit{J. Symbolic Comput.} \textbf{40}(6) (2005) 1302--1324.

\bibitem{Qi} L. Qi, Symmetric nonnegative tensors and copositive tensors, \textit{Linear Algebra Appl.} \textbf{439} (2013) 228--238.

\bibitem{Sh} J. Shao, A general product of tensors with applications, \textit{Linear Algebra Appl.} \textbf{439} (2013) 2350--2366.


\bibitem{WaP} W. Wang, Y. Peng, Counting the maximum number of sunflowers in hypergraphs with given matching number, \textit{European J. Combin.} \textbf{136} (2026) 104379.

\bibitem{Ya-Ya} Y. Yang, Q. Yang, Further results for Perron-Frobenius theorem for nonnegative tensors, \textit{SIAM J. Matrix Anal. Appl.} \textbf{31}(5) (2010) 2517--2530.

}

\end{thebibliography}
\end{document}